\newtheorem{thm}{Theorem}[section]
\newtheorem{lemma}[thm]{Lemma}
\newtheorem{cor}[thm]{Corollary}
\newtheorem{prop}[thm]{Proposition}
\newtheorem{defn}[thm]{Definition}
\def\add{\operatorname{add}\nolimits}
\def\Ext{\operatorname{Ext}\nolimits}
\def\Tor{\operatorname{Tor}\nolimits}
\def\Hom{\operatorname{Hom}\nolimits}
\def\id{\operatorname{id}\nolimits}
\def\rad{\operatorname{rad}\nolimits}
\def\soc{\operatorname{soc}\nolimits}
\def\Mod{\operatorname{Mod-\!}\nolimits}
\def\mod{\operatorname{mod-\!}\nolimits}
\def\Inj{\operatorname{Inj-\!}\nolimits}
\def\pd{\operatorname{pd}\nolimits}
\def\findim{\operatorname{findim}\nolimits}
\def\FinDim{\operatorname{FinDim}\nolimits}
\def\sup{\operatorname{sup}\nolimits}
\author{Jeremy Rickard} 
\address{School of Mathematics, University of Bristol, Bristol BS8 1TW, UK}
\email{j.rickard@bristol.ac.uk}
\date\today 
\title[Unbounded derived categories]{Unbounded derived categories and the finitistic dimension
  conjecture} 
\keywords{ Finite dimensional algebras; derived categories; finitistic dimension conjecture}
\begin{document}
\maketitle

\begin{abstract}
  We consider the question of whether the injective modules generate
  the unbounded derived category of a ring as a triangulated category
  with arbitrary coproducts. We give an example of a non-Noetherian
  commutative ring where they don't, but prove that they do for any
  Noetherian commutative ring. For non-commutative finite dimensional
  algebras the question is open, and we prove that if injectives
  generate for such an algebra, then the finitistic dimension
  conjecture holds for that algebra.
\end{abstract}

\section{Introduction}
\label{sec:intro}

In the representation theory of finite dimensional algebras, a circle
of interconnected questions and conjectures collectively known as the
``homological conjectures'' have inspired a lot of work over the last
few decades. In the 1980s derived categories were introduced into
representation theory, and it was natural to try to interpret these
homological conjectures in terms of derived
categories. Happel~\cite{happel:conj},\cite{happel} related many of
these conjectures to bounded derived categories and
in~\cite{happel:conj} gives a good overview of the conjectures.

More recently, it has increasingly become clear that it is often more
natural and convenient to work with the unbounded derived category of
complexes of arbitrary modules, since this has better properties,
especially having arbitrary coproducts.

In a talk in Nordfjordeid in 2001~\cite{keller}, Keller considered the
question of whether the unbounded derived category is generated, as a
triangulated category with infinite coproducts, by the injective
modules (if this is the case, then we say that ``injectives
generate''), and pointed out that if this were the case then some of
the homological conjectures for finite dimensional algebras would
follow: in particular, the Nunke condition that for any finitely
generated nonzero module $X$ for a finite dimensional algebra $A$,
$\Ext^i_A(DA,X)\neq0$ for some $i\geq0$, where $DA$ is the dual of the
regular module, and \emph{a fortiori} the generalized Nakayama
conjecture that for every simple $A$-module $S$, $\Ext^i_A(DA,S)\neq0$
for some $i\geq0$. He asked whether there was any connection to the
finitistic dimension conjecture, which is known to imply many of the
other homological conjectures, including the Nunke condition and
generalized Nakayama conjecture. 

In this paper we answer this question in Theorem~\ref{thm:findim} by
proving that if injectives generate for a finite dimensional algebra
$A$ then the finitistic dimension conjecture holds for $A$. At present
I know no examples of finite dimensional algebras for which injectives
do not generate.

We first, in Section~\ref{sec:inj}, consider the question of whether
injective modules generate the derived category for more general
rings. Our main results are a proof that they always do for
commutative Noetherian rings, and an example of a commutative
non-Noetherian ring where they don't.

In Section~\ref{sec:colocalizing} we briefly consider the dual condition
of projective modules generating the derived category as a
triangulated category with products. It is not clear that the two
conditions are equivalent, but for finite dimensional algebras we
prove that ``injectives generate'' implies the dual condition.

In Section~\ref{sec:example} we work though an example of a finite
dimensional algebra for which injectives generate to illustrate some
of the more elementary techniques that can be used.

Finally we develop some techniques for proving, for particular finite
dimensional algebras, that injective modules generate the derived
category, and use these to prove that injectives generate for some
classes of algebras where the finitistic dimension conjecture is known
to hold. In some of these, the finitistic dimension conjecture is
obvious, but it is less obvious that injectives generate, and there
are also several classes of algebras for which the finitistic
dimension conjecture is known to hold but for which I don't know
whether injectives generate.

\section*{Acknowledgements}
I would like to thank Birge Huisgen-Zimmermann, Bernhard Keller,
Henning Krause and Greg Stevenson for conversations and email
exchanges related to the contents of this paper.

\section{Preliminaries}
\label{sec:preliminaries}

In this section we shall fix some notation and discuss some
generalities on localizing subcategories.

Let $R$ be a ring, not necessarily commutative. We shall be
considering $R$-modules and complexes of $R$-modules, and our standard
conventions will be that by an $R$-module we mean a right $R$-module,
and by a complex we mean a cochain complex, unless we specify
otherwise. We shall use the following notation for various categories
related to $R$.

\begin{itemize}
\item $\Mod R$ is the category of (right) $R$-modules.
\item $\mathcal{K}(R)=\mathcal{K}(\Mod R)$ is the homotopy category of
  (cochain) complexes of $R$-modules.
\item $\mathcal{D}(R)=\mathcal{D}(\Mod R)$ is the derived category of
  (cochain) complexes of $R$-modules.
\end{itemize}

We impose no finiteness conditions on the modules in $\Mod R$ and no
boundedness conditions on the complexes in $\mathcal{K}(R)$ and
$\mathcal{D}(R)$.

We shall regard $\Mod R$ as a full subcategory of both
$\mathcal{K}(R)$ and $\mathcal{D}(R)$ in the usual way, identifying a
module $M$ with the complex which has $M$ in degree zero and zero in
all other degrees.

The derived category $\mathcal{D}(R)$ is a triangulated category with
arbitrary (small) coproducts, so it makes sense to consider the
localizing subcategory generated by a particular class $\mathcal{S}$
of objects (i.e., the smallest triangulated subcategory of
$\mathcal{D}(R)$ that contains $\mathcal{S}$ and is closed under
coproducts) and to ask whether this is the whole of
$\mathcal{D}(R)$. We shall denote by $\langle\mathcal{S}\rangle$ the
localizing subcategory generated by $\mathcal{S}$, and if this is the
whole of $\mathcal{D}(R)$ then we shall say that ``$\mathcal{S}$
generates $\mathcal{D}(R)$''. There are other notions of generation
for triangulated categories, but this is the only one that we shall
consider here, except briefly in Section~\ref{sec:colocalizing}.

We shall start by considering some easy concrete properties of
localizing subcategories, none of which are original, which will be
useful when we come to look at examples.

\begin{prop}\label{prop:loc}
  Let $\mathcal{C}$ be a localizing subcategory of $\mathcal{D}(R)$.
  \begin{enumerate}[label=\alph*)]
  \item If $0\to X\to Y\to Z\to 0$ is a short exact sequence of
    complexes, and two of the three objects $X$, $Y$ and $Z$ are in
    $\mathcal{C}$, then so is the third.
  \item If a complex $X$ is in $\mathcal{C}$ then so is the shifted
    complex $X[t]$ for every $t\in\mathbb{Z}$.
  \item If $X$ and $Y$ are quasi-isomorphic complexes and $X$ is in
    $\mathcal{C}$, then so is $Y$.
  \item If $\left\{X_i\mid i\in I\right\}$ is a set of objects of
    $\mathcal{C}$, then $\bigoplus_{i\in I}X_i$ is in $\mathcal{C}$.
  \item If $X\oplus Y$ is in $\mathcal{C}$ then so are $X$ and $Y$.
  \item If $X=X^\ast$ is a bounded complex, where the module
    $X^i$ is in $\mathcal{C}$ for every $i$, then $X$ is in
    $\mathcal{C}$.
  \item If
    $X_0\stackrel{\alpha_0}{\longrightarrow}X_1\stackrel{\alpha_1}{\longrightarrow}X_2\stackrel{\alpha_2}{\longrightarrow}\dots$
    is a sequence of cochain maps between complexes, with $X_i$ in
    $\mathcal{C}$ for all $i$, then $\varinjlim X_i$ is in
    $\mathcal{C}$.
  \item If $X:=X^\ast$ is a bounded above complex, where the module
    $X^i$ is in $\mathcal{C}$ for every $i$, then $X$ is in
    $\mathcal{C}$.
  \end{enumerate}
\end{prop}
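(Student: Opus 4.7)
The plan is to dispatch parts (a)--(d) from general nonsense about triangulated categories with coproducts, then bootstrap from these to get the more substantial parts (e)--(h).

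For part (a), I would invoke the standard fact that a short exact sequence of complexes $0\to X\to Y\to Z\to 0$ gives rise to a distinguished triangle $X\to Y\to Z\to X[1]$ in $\mathcal{D}(R)$, so closure under the 2-out-of-3 property of triangulated subcategories does the job. Part (b) is immediate because any triangulated subcategory is closed under the shift. Part (c) follows because quasi-isomorphic complexes are isomorphic in $\mathcal{D}(R)$, and any full subcategory containing an object contains all objects isomorphic to it. Part (d) is literally built into the definition of \emph{localizing}.

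For part (e), the usual Eilenberg-swindle argument applies: given $X\oplus Y\in\mathcal{C}$, form the countable coproduct $W:=\bigoplus_{n\geq 0}(X\oplus Y)$, which lies in $\mathcal{C}$ by (d). A rotation/regrouping shows $W\cong X\oplus W$, and the projection $W\to X$ factoring an idempotent through objects of $\mathcal{C}$ lets me realize $X$ as the cone on an endomorphism of $W$ (the B\"okstedt--Neeman idempotent-completion trick); by (a)/(b) applied to the corresponding triangle, $X$ is in $\mathcal{C}$, and similarly for $Y$.

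For part (f), I would induct on the number of nonzero terms of the bounded complex $X^\ast$. If only one term is nonzero, the result is (b) plus the hypothesis. Otherwise, chop off the top (or bottom) term using the brutal truncation short exact sequence
\[
0\to \sigma_{\leq n-1}X\to X\to X^n[-n]\to 0,
\]
where $X^n[-n]$ is a single module in $\mathcal{C}$ (up to shift, via (b)) and $\sigma_{\leq n-1}X$ is in $\mathcal{C}$ by induction; then (a) closes the argument.

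For part (g), I would use the mapping telescope: there is a short exact sequence of complexes
\[
0\to \bigoplus_{i\geq 0}X_i \xrightarrow{\;1-\mathrm{shift}\;} \bigoplus_{i\geq 0}X_i\to \varinjlim X_i\to 0,
\]
where the left map sends the $i$th summand to itself minus its image under $\alpha_i$ in the $(i+1)$st summand. The two outer terms are in $\mathcal{C}$ by (d), so (a) gives $\varinjlim X_i\in\mathcal{C}$.

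For part (h) — which I expect to be the only really substantive step beyond setting up the right triangles — I would write a bounded-above complex $X$ as the direct limit of its brutal truncations $\sigma_{\geq -n}X$ (complexes that are zero in degrees below $-n$ and agree with $X$ above). Each $\sigma_{\geq -n}X$ is a bounded complex whose entries are in $\mathcal{C}$, hence in $\mathcal{C}$ by (f). The structure maps $\sigma_{\geq -n}X\to \sigma_{\geq -n-1}X$ form a sequence of cochain maps with $\varinjlim_n \sigma_{\geq -n}X = X$ as complexes, so (g) finishes. The main thing to be careful about is that the colimit here is a genuine colimit of complexes computing $X$ termwise, not just in the derived category, so that (g) applies verbatim.
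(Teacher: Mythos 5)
Your overall strategy matches the paper's: parts (a)--(d) are definitional, (e) by Eilenberg swindle, (f) by induction using brutal truncation, (g) by the mapping telescope, and (h) by combining (f) and (g). But there is one concrete error to fix, and one place where you have made the argument unnecessarily heavy.

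In part (f) your displayed sequence
\[
0\to \sigma_{\leq n-1}X\to X\to X^n[-n]\to 0
\]
is not a short exact sequence of complexes when $n$ is the top nonvanishing degree. The problem is the left-hand map: extending $\sigma_{\leq n-1}X$ by zero in degree $n$ is a chain map into $X$ only if $d^{n-1}\colon X^{n-1}\to X^n$ vanishes, which is not automatic. (And reading $n$ as the bottom degree does not rescue the display either, since then $\sigma_{\leq n-1}X=0$.) The roles of sub and quotient must be swapped: the single-degree complex $X^n[-n]$ embeds into $X$ precisely because $n$ is maximal, so $d^n=0$, and the correct sequence is
\[
0\to X^n[-n]\to X\to\sigma^{<n}X\to 0,
\]
which is what the paper uses. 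With that correction, the induction goes through exactly as you intend.

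In part (e) your detour through the B\"okstedt--Neeman idempotent-splitting machinery is more than is needed and slightly obscures what is going on: the splitting $W\cong X\oplus W$ is already visible at the level of complexes, so there is no idempotent that needs to be split. The paper writes down the Eilenberg-swindle short exact sequence of complexes
\[
0\to X\to (X\oplus Y\oplus X\oplus\cdots)\to(Y\oplus X\oplus Y\oplus\cdots)\to 0,
\]
whose middle and right terms are both the countable coproduct of $X\oplus Y$, and then (a) and (d) finish immediately. Your version is morally the same, but stating it this way avoids appealing to (a) for a triangle that is not literally presented as a short exact sequence of complexes. Everything else — (a)--(d), the telescope in (g), and writing a bounded-above complex as the genuine (termwise) colimit of its bottom brutal truncations in (h) — matches the paper.
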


\begin{proof}
  (a), (b) and (c) are just restatements of the definition of a
  triangulated subcategory, since a short exact sequence of complexes
  becomes a distinguished triangle in the derived category, and (d) is
  the definition of what it means for a triangulated subcategory to be
  localizing.

There is a short exact sequence
$$0\to X\to(X\oplus Y\oplus X\oplus\dots)\to (Y\oplus X\oplus Y\oplus\dots)\to0$$
where the last two terms are both coproducts of countably many copies
of $X\oplus Y$, so (e) follows from (a) and (d). This trick is a form
of the ``Eilenberg swindle''.

Using (a) and (b), an easy induction on the length of a bounded
complex $X^\ast$ proves (f), since if $n$ is maximal with $X^n\neq0$,
then there is an exact sequence of complexes
$$0\to X^n[-n]\to X^\ast\to\sigma^{<n}X^\ast\to0,$$
where $\sigma^{<n}X^\ast=\dots\to X^{n-2}\to X^{n-1}\to 0\to\dots$ is
the ``brutal'' truncation of $X^\ast$.

For (g), there is a short exact sequence
$$0\to\bigoplus_{i=0}^\infty X_i\to\bigoplus_{i=0}^\infty X_i\to\varinjlim X_i\to0,$$
where the first map is $\alpha_n-\id_{X_n}$ on $X_n$, and so the claim
follows from (a) and (d). Note that this is a special case of the fact
that a localizing subcategory is closed under homotopy colimits (see,
for example, \cite{bokstedt_neeman}) of sequences of maps.

A bounded above complex $X^\ast$ is the direct limit of its brutal
truncations
$\sigma^{\geq n}X^\ast=\dots\to0\to X^n\to X^{n+1}\to\dots$, which are
bounded, and so (h) follows from (f) and (g).
\end{proof}

It is important to note that the proof of (h) would not work (and in
fact the statement would be false in general) if we replaced ``bounded
above'' by ``bounded below'', since a bounded below complex is the
inverse limit, rather than direct limit, of its bounded brutal
truncations. It would be true if we also replaced ``localizing
subcategory'' by ``colocalizing subcategory'' (i.e., triangulated
subcategory closed under products).

It is well-known that $\mathcal{D}(R)$ is generated (as a triangulated
category with coproducts) by the projective modules. There are more
sophisticated, and arguably better, ways of proving this, but we'll
give a rather concrete argument, since some of the ideas will be
relevant later on.

\begin{prop}\label{prop:projgen}
  The projective $R$-modules generate $\mathcal{D}(R)$.
\end{prop}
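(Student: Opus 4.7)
The plan is to apply the various closure properties from Proposition~\ref{prop:loc} to build up from projective modules to all of $\mathcal{D}(R)$ in three stages: single modules, then bounded above complexes, then arbitrary complexes. Write $\mathcal{C}=\langle\mathcal{P}\rangle$ for the localizing subcategory generated by the projective modules.

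First I would show that every $R$-module $M$ lies in $\mathcal{C}$. For this I take a projective resolution $P^\ast\to M$, which is a bounded above complex whose terms $P^i$ are projective modules, hence objects of $\mathcal{C}$. By part~(h) of Proposition~\ref{prop:loc}, $P^\ast$ itself lies in $\mathcal{C}$, and since $P^\ast$ is quasi-isomorphic to $M$, part~(c) then places $M$ in $\mathcal{C}$.

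Next, for a bounded above complex $X^\ast$ with arbitrary module entries $X^i$, the previous step shows that each $X^i$ belongs to $\mathcal{C}$, so a second application of~(h) puts $X^\ast$ in $\mathcal{C}$.

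Finally, for a completely arbitrary complex $X$, I would express $X$ as a direct limit $X=\varinjlim_{n\to\infty}\tau^{\leq n}X$ of its good truncations. Each $\tau^{\leq n}X$ is bounded above and so belongs to $\mathcal{C}$ by the previous paragraph, and the inclusions $\tau^{\leq n}X\hookrightarrow\tau^{\leq n+1}X$ are cochain maps whose sequential direct limit recovers $X$ degreewise. Part~(g) of Proposition~\ref{prop:loc} then gives $X\in\mathcal{C}$, completing the proof. The only mildly subtle point is the passage from bounded above to unbounded complexes via the truncation direct system; everything else is a straightforward unwinding of projective resolutions through the closure properties already established.
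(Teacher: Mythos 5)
Your proof is correct and follows essentially the same route as the paper: reduce to modules via good truncations and part~(g), handle bounded above complexes via part~(h), and treat individual modules via projective resolutions and part~(c). You present it bottom-up (modules $\to$ bounded above complexes $\to$ arbitrary complexes) while the paper presents it as a top-down reduction, but the logical content and the lemmas invoked are identical.
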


\begin{proof}
  Let $X^\ast$ be a complex of $R$-modules. It is the direct limit of
  its ``good'' truncations
$$\tau^{\leq n}X^\ast=\dots\longrightarrow X^{n-2}\stackrel{d^{n-2}}{\longrightarrow}X^{n-1}\stackrel{d^{n-1}}{\longrightarrow}\ker d^n\longrightarrow0\longrightarrow\dots,$$
which are bounded above, so by (g) and (h) of
Proposition~\ref{prop:loc} it suffices to prove that every $R$-module
is in the localizing subcategory generated by the projectives.

But an $R$-module $M$ is quasi-isomorphic to any projective resolution
of $M$, and is therefore in the localizing subcategory generated by
the projectives by (c) and (h) of Proposition~\ref{prop:loc}.
\end{proof}

Since every projective module is a direct summand of a direct sum of
copies of the regular right $R$-module $R_R$, this shows that the
single object $R_R$ generates $\mathcal{D}(R)$.

\section{The localizing subcategory generated by injectives}
\label{sec:inj}

A similar proof to that of Proposition~\ref{prop:projgen} shows that
the injective $R$-modules generate $\mathcal{D}(R)$ as a triangulated
category with products (i.e., the colocalizing subcategory generated
by the injectives is the whole of $\mathcal{D}(R)$), but we shall
consider the {\em localizing} subcategory generated by
injectives. We'll denote by $\Inj R$ the category of injective
$R$-modules, so this localizing subcategory will be denoted by
$\langle\Inj R\rangle$.

This paper will explore the following condition that a ring might satisfy.

\begin{defn}
  If $\langle\Inj R\rangle=\mathcal{D}(R)$ then we say that {\bf
    injectives generate} for $R$.
\end{defn}

In later sections we shall mainly consider the case of finite
dimensional algebras, but in this section we shall prove a few results
for arbitrary rings and give an example of a ring for which injectives
do not generate.

First, there is one class of rings, including those of finite global
dimension, for which it is very easy to see that injectives generate.

\begin{thm}\label{thm:fingd}
  If the regular right module $R_R$ has finite injective dimension,
  then injectives generate for $R$. In particular, injectives generate
  for any ring with finite global dimension.
\end{thm}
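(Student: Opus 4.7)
The plan is to reduce the statement to showing that the single module $R_R$ lies in $\langle\Inj R\rangle$, and then obtain this from the existence of a finite injective resolution.

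First, I would invoke the remark following Proposition~\ref{prop:projgen}, which says that $R_R$ generates $\mathcal{D}(R)$. Consequently, any localizing subcategory of $\mathcal{D}(R)$ containing $R_R$ is automatically all of $\mathcal{D}(R)$. So it suffices to prove $R_R\in\langle\Inj R\rangle$.

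Now, since $R_R$ has finite injective dimension, say $n$, there is an exact sequence
$$0\to R_R\to I^0\to I^1\to\dots\to I^n\to 0$$
with each $I^k$ an injective $R$-module. Equivalently, $R_R$ is quasi-isomorphic to the bounded complex $I^\ast=(I^0\to I^1\to\dots\to I^n)$ concentrated in degrees $0,\dots,n$. The complex $I^\ast$ is a bounded complex whose terms all lie in $\langle\Inj R\rangle$, so by part (f) of Proposition~\ref{prop:loc} we have $I^\ast\in\langle\Inj R\rangle$. Applying part (c) of Proposition~\ref{prop:loc} then gives $R_R\in\langle\Inj R\rangle$, and we are done by the first paragraph.

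For the second assertion, if $R$ has finite right global dimension then the supremum of the injective dimensions of right $R$-modules is finite (this is the standard equivalent formulation of global dimension), so in particular $R_R$ itself has finite injective dimension, and the first part of the theorem applies. There is no real obstacle here; the only thing to notice is that Proposition~\ref{prop:loc} already contains exactly the two ingredients (bounded complexes of objects in the subcategory, and replacement by quasi-isomorphic complexes) that are needed to promote a finite injective resolution into membership in $\langle\Inj R\rangle$.
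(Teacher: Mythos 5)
Your proof is correct and follows essentially the same route as the paper: reduce to showing $R_R\in\langle\Inj R\rangle$ using the fact that $R_R$ generates $\mathcal{D}(R)$, then use the bounded injective resolution together with Proposition~\ref{prop:loc} (you helpfully spell out parts (f) and (c), which the paper leaves implicit). The handling of the global dimension case matches the paper's as well.
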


\begin{proof}
  The module $R_R$ is quasi-isomorphic to an injective resolution,
  which if $R$ has finite injective dimension can be taken to be a
  bounded complex of injectives, which is in $\langle\Inj R\rangle$ by
  Proposition~\ref{prop:loc}. Since $R_R$ generates $\mathcal{D}(R)$
  by Proposition~\ref{prop:projgen}, injectives generate.
\end{proof}

For commutative Noetherian rings there is a classification of
localizing subcategories of $\mathcal{D}(R)$ that makes it easy to see
that injectives generate.

\begin{thm}\label{thm:noeth}
  If $R$ is a commutative Noetherian ring, then injectives generate
  for $R$.
\end{thm}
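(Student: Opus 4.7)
The plan is to appeal to Neeman's classification of localizing subcategories of $\mathcal{D}(R)$ for commutative Noetherian rings, which asserts a bijection between such subcategories and subsets of $\operatorname{Spec}(R)$. Under this bijection, a localizing subcategory $\mathcal{C}$ corresponds to
$$\operatorname{supp}(\mathcal{C})=\{\mathfrak{p}\in\operatorname{Spec}(R):k(\mathfrak{p})\otimes^L_RX\neq0\text{ for some }X\in\mathcal{C}\},$$
and $\mathcal{D}(R)$ itself corresponds to the whole of $\operatorname{Spec}(R)$. So to prove the theorem it suffices to exhibit, for each prime $\mathfrak{p}$, some injective $I$ with $k(\mathfrak{p})\otimes^L_R I\neq0$.

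By Matlis' structure theorem for injectives over a Noetherian ring, the natural candidate is the indecomposable injective envelope $E(R/\mathfrak{p})$, which automatically lies in $\langle\Inj R\rangle$. One is thus reduced to showing $k(\mathfrak{p})\otimes^L_R E(R/\mathfrak{p})\neq0$. Since $E(R/\mathfrak{p})$ is already an $R_\mathfrak{p}$-module, one may compute this derived tensor product over the local ring $R_\mathfrak{p}$. Here $H^0$ can genuinely vanish — for example, if $R_\mathfrak{p}$ is a discrete valuation ring then $E(R/\mathfrak{p})$ is divisible and $k(\mathfrak{p})\otimes_{R_\mathfrak{p}}E(R/\mathfrak{p})=0$ — but some higher Tor is always nonzero, since the socle identification $\Hom_{R_\mathfrak{p}}(k(\mathfrak{p}),E(R/\mathfrak{p}))=k(\mathfrak{p})\neq0$ forces $\Tor_i^{R_\mathfrak{p}}(k(\mathfrak{p}),E(R/\mathfrak{p}))\neq0$ for some $i$, as one sees by inspecting a minimal free resolution of $k(\mathfrak{p})$ over $R_\mathfrak{p}$.

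The substantive ingredient is Neeman's theorem; everything else is routine Matlis theory and basic commutative algebra. The main obstacle I anticipate is really just recognising that Neeman's classification is the right tool. A naive direct attempt to show $R_R\in\langle\Inj R\rangle$ by simply injectively resolving the regular module runs into exactly the issue flagged after Proposition~\ref{prop:loc}, since minimal injective resolutions of $R$ are typically unbounded and bounded-below complexes of injectives need not lie in $\langle\Inj R\rangle$; the support-theoretic viewpoint sidesteps this entirely.
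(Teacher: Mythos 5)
Your argument is correct and follows essentially the same route as the paper: both invoke the Hopkins--Neeman classification of localizing subcategories of $\mathcal{D}(R)$ by subsets of $\operatorname{Spec}(R)$, with the indecomposable injective $E(R/\mathfrak{p})$ witnessing that $\mathfrak{p}$ lies in the support of $\langle\Inj R\rangle$. The only divergence is that the paper cites Neeman's Lemma 2.9 outright for the fact that $E(R/\mathfrak{p})$ generates the localizing subcategory attached to $\{\mathfrak{p}\}$, while you re-derive the needed half; note that your closing step (nonzero socle forces some nonzero $\Tor_i^{R_\mathfrak{p}}(k(\mathfrak{p}),E(R/\mathfrak{p}))$) is true but is not really visible from ``inspecting a minimal free resolution'' alone --- the clean argument applies Matlis duality $\Hom_{R_\mathfrak{p}}(-,E(R/\mathfrak{p}))$, which is exact and faithful, to turn the question into the non-vanishing of $\Ext^i_{\widehat{R_\mathfrak{p}}}(k(\mathfrak{p}),\widehat{R_\mathfrak{p}})$, which holds at $i=\operatorname{depth}\widehat{R_\mathfrak{p}}$.
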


\begin{proof}
  Let $R$ be a commutative Noetherian ring. By a famous theorem of
  Hopkins~\cite{hopkins} and Neeman~\cite{neeman:chromatic}, there is
  an inclusion preserving bijection between the collection of sets of
  prime ideals and the collection of localizing subcategories of
  $\mathcal{D}(R)$, and by Lemma 2.9 of
  Neeman~\cite{neeman:chromatic}, for any prime ideal $\mathfrak{p}$,
  the injective hull $I(R/\mathfrak{p})$ of $R/\mathfrak{p}$ is in the
  localizing subcategory corresponding to $\{\mathfrak{p}\}$, which is
  a minimal nonzero localizing subcategory, and therefore generated by
  $I(R/\mathfrak{p})$. Hence the only localizing subcategory that
  contains all injectives is the one corresponding to
  $\operatorname{Spec}(R)$, the set of all prime ideals, which is
  $\mathcal{D}(R)$.
\end{proof}

Next we show that if $R$ is derived equivalent to a ring for which
injectives generate, then injectives generate for $R$. This follows
from the fact that we can characterize the objects of the derived
category isomorphic to bounded complexes of injectives.

\begin{thm}\label{thm:dereq}
  Let $R$ and $S$ be derived equivalent rings; i.e., $\mathcal{D}(R)$
  and $\mathcal{D}(S)$ are equivalent as triangulated categories. If
  injectives generate for $S$ then injectives generate for $R$.
\end{thm}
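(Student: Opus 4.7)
The plan is to transport the generating set via the equivalence $F\colon\mathcal{D}(S)\to\mathcal{D}(R)$. Any equivalence of categories preserves coproducts, since its quasi-inverse serves as both a left and right adjoint, and $F$ also preserves triangles and the shift. Consequently $F$ takes every localising subcategory to a localising subcategory; in particular it carries $\langle\Inj S\rangle$ to $\langle F(\Inj S)\rangle$, and the hypothesis $\langle\Inj S\rangle=\mathcal{D}(S)$ forces $\langle F(\Inj S)\rangle=\mathcal{D}(R)$. It therefore suffices to show that $F(I)\in\langle\Inj R\rangle$ for every injective $S$-module $I$.

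The key tool is the triangle-theoretic characterisation advertised by the paper: an object of the unbounded derived category is isomorphic to a bounded complex of injective modules exactly when it satisfies a purely derived-categorical condition. Informally this condition should amount to \emph{bounded cohomology together with finite injective dimension}, formulated so that both clauses transfer across equivalences (for instance, by expressing them via the cohomological amplitude of the functor $\Hom_{\mathcal{D}(R)}(-,X)$ applied to appropriately invariant test classes). Granting such a characterisation, $F$ automatically preserves the class of objects isomorphic to a bounded complex of injectives. Since any injective module $I$ is trivially a bounded complex of injectives of length one, $F(I)$ is then isomorphic in $\mathcal{D}(R)$ to a bounded complex of injective $R$-modules; by Proposition~\ref{prop:loc}(f) such a complex lies in $\langle\Inj R\rangle$, so $F(I)\in\langle\Inj R\rangle$, and combining with the first paragraph gives $\mathcal{D}(R)=\langle F(\Inj S)\rangle\subseteq\langle\Inj R\rangle$.

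The main obstacle is the characterisation itself. A general triangle equivalence need not preserve the standard $t$-structure, so neither ``$X$ is bounded'' nor ``$X$ has finite injective dimension'' is obviously triangle-invariant on the nose. One must find an intrinsic reformulation and then verify both directions against the module-theoretic definition: the forward direction is an estimate showing that the $\Hom$-amplitude of a bounded complex of injectives against any bounded test object is finite, while the converse is the classical truncation argument that turns a bounded-above injective resolution of an object of finite injective dimension into a bounded one by replacing the $n$th kernel with itself (which injectivity of the next $\Ext$ forces to be injective).
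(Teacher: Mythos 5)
Your overall strategy matches the paper's: transport the generating set through the equivalence $F$, and argue that $F$ sends injective $S$-modules into $\langle\Inj R\rangle$ by finding a triangle-invariant description of ``isomorphic to a bounded complex of injectives.'' But the crucial lemma---that such an invariant characterisation exists---is exactly the step you flag as an obstacle and leave unproved. Asserting that the condition ``should amount to'' bounded cohomology plus finite injective dimension, and that ``one must find an intrinsic reformulation,'' does not close the argument; without a concrete invariant description the proof does not go through.

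The paper supplies this missing characterisation in two explicit, purely categorical stages, each of which is automatically preserved by any triangle equivalence. First, the compact objects of $\mathcal{D}(R)$ are precisely the perfect complexes, and compactness is triangle-invariant. Second, an object $B$ has bounded cohomology if and only if, for every compact object $C$, $\Hom(C,B[t])=0$ for all but finitely many $t$; since the class of compact objects is invariant, so is this characterisation. Third, an object $X$ is isomorphic to a bounded complex of injectives if and only if, for every $B$ with bounded cohomology, $\Hom(B,X[t])=0$ for all but finitely many $t$; again invariance is inherited. This is the concrete instantiation of your ``$\Hom$-amplitude against invariant test classes'': the test class for boundedness is the compacts, and the test class for finite injective dimension is the bounded-cohomology objects. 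Once this is in place, $F$ restricts to an equivalence on bounded complexes of injectives, and your remaining argument (via Proposition~\ref{prop:loc}(f), or equivalently the paper's remark that bounded complexes of injectives generate the same localizing subcategory as $\Inj R$) finishes the proof. So the gap is not a wrong turn but an unfilled hole, and it is the main content of the theorem.
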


\begin{proof}
  It is well-known that the compact objects of the derived category
  are the perfect complexes: those isomorphic to bounded complexes of
  finitely generated projectives.

  The objects with cohomology bounded in degree are those objects $B$
  such that, for each perfect complex $C$, $\Hom(C,B[t])=0$ for all
  but finitely many $t\in\mathbb{Z}$.

  The objects isomorphic to bounded complexes of injectives (i.e.,
  those with bounded injective resolutions) are those objects $X$ such
  that, for every object $B$ with bounded cohomology, $\Hom(B,X[t])=0$
  for all but finitely many $t\in\mathbb{Z}$.

  Thus an equivalence of triangulated categories between
  $\mathcal{D}(R)$ and $\mathcal{D}(S)$ restricts to an equivalence
  between the subcategories of objects isomorphic to bounded complexes
  of injectives. By Proposition~\ref{prop:loc} this subcategory
  generates the same localizing subcategory as the injectives, so if
  injectives generate for one of the two rings $R$ and $S$ then they
  generate for the other.
\end{proof}

We end this section by giving an example of a non-Noetherian
commutative ring for which injectives don't generate.

Let $k$ be a field. If $V$ is a $k$-vector space, $DV$ will denote the
dual space $\Hom_k(V,k)$.

\begin{thm}
  Let $R=k[x_0,x_1,\dots]$, the ring of polynomials over $k$ in
  countably many variables. Injectives do not generate for $R$.
\end{thm}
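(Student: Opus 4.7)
The plan is to exhibit a single object of $\mathcal{D}(R)$ that fails to lie in $\langle\Inj R\rangle$, and the natural candidate is the residue field $k = R/\mathfrak{m}$ at the maximal ideal $\mathfrak{m} = (x_0, x_1, \ldots)$. I would consider the full subcategory
\[
\mathcal{L} \;=\; \{\, X \in \mathcal{D}(R) : \Hom_{\mathcal{D}(R)}(X, k[n]) = 0 \text{ for all } n \in \mathbb{Z} \,\},
\]
which is a localizing subcategory (because $\Hom_{\mathcal{D}(R)}(-, k[n])$ converts coproducts to products and triangles to long exact sequences) and does not contain $k$ itself. Everything then reduces to showing that every injective $R$-module lies in $\mathcal{L}$, i.e.\ that $\Ext^n_R(I, k) = 0$ for every injective $I$ and every $n \geq 0$.

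First I would reduce to a manageable family of injectives: the adjunction $\Hom_R(-, \Hom_k(R, V)) \cong \Hom_k(-, V)$ shows that $\Hom_k(R, V)$ is $R$-injective for every $k$-vector space $V$, and the canonical embedding $M \hookrightarrow \Hom_k(R, M)$ sending $m \mapsto (r \mapsto rm)$ exhibits every injective $R$-module as a direct summand of some such module. Since $\mathcal{L}$ is closed under direct summands by Proposition~\ref{prop:loc}(e), it is enough to verify $\Hom_k(R, V) \in \mathcal{L}$ for every $V$.

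Because $k$ is a field, computing $\Ext$ and $\Tor$ from a common free resolution yields the identification $\Ext^n_R(M, k) \cong \Hom_k(\Tor_n^R(M, k), k)$, so it suffices to establish $\Tor_n^R(\Hom_k(R, V), k) = 0$ for all $n \geq 0$. The natural flat resolution is the infinite Koszul complex $K_\bullet$ on the generators $x_0, x_1, \ldots$, obtained as the directed colimit over $m$ of the finite Koszul resolutions $K^{(m)}_\bullet$ of $R/(x_0, \ldots, x_m)$; since tensor product and homology both commute with directed colimits,
\[
\Tor_p^R(\Hom_k(R, V), k) \;=\; \varinjlim_m H_p\bigl(\Hom_k(R, V) \otimes_R K^{(m)}_\bullet\bigr).
\]

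The main obstacle is the Koszul bookkeeping for the finite-level homologies. The crucial input is that each $x_i$ acts surjectively on $\Hom_k(R, V)$: given $f$, define $g(x_i s) := f(s)$ on the ideal $x_i R$ and $g := 0$ on any $k$-linear complement, so that $x_i g = f$. Using the triangle $K^{(m)}_\bullet \xrightarrow{\,x_{m+1}\,} K^{(m)}_\bullet \to K^{(m+1)}_\bullet$ and its long exact sequence in homology, I would prove by induction on $m$ that the only nonvanishing Koszul homology of $\Hom_k(R, V) \otimes_R K^{(m)}_\bullet$ sits in top degree $m+1$, where it equals $\Hom_k(k[x_{m+1}, x_{m+2}, \ldots], V)$. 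The surjectivity of $x_{m+1}$ then kills this top homology at the next stage, so $H_p(\Hom_k(R,V) \otimes_R K^{(m)}_\bullet) = 0$ as soon as $m \geq p$, and the directed colimit vanishes.
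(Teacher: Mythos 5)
Your proof is correct and follows the same overall strategy as the paper: show that the residue field $k=R/(x_0,x_1,\dots)$ is not in $\langle\Inj R\rangle$ by reducing to the vanishing of $\Ext^t_R(I,k)$ for all injective $I$, pass to $\Tor$ via $k$-duality, and compute $\Tor$ against the infinite Koszul complex expressed as the filtered colimit of the finite Koszul complexes $K^{(m)}_\bullet$. The one place your route genuinely streamlines the argument is the choice of cofinal family of injectives: you work with $\Hom_k(R,V)$ for arbitrary $k$-vector spaces $V$, whereas the paper uses the products $DR^J$ of copies of the injective cogenerator $DR=\Hom_k(R,k)$. Your choice lets you compute $\Hom_k(R,V)\otimes_R K^{(m)}_\bullet$ directly, sidestepping the paper's extra step of commuting the product over $J$ past the tensor (which the paper justifies by observing that each finite Koszul complex consists of finitely generated modules). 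The core Koszul computation is the same up to duality: your observation that each $x_i$ acts surjectively on $\Hom_k(R,V)$, so that the homology of $\Hom_k(R,V)\otimes_R K^{(m)}_\bullet$ is concentrated in top degree $m+1$ where it equals $\Hom_k(k[x_{m+1},x_{m+2},\ldots],V)$, is precisely the dual of the paper's inductive calculation that $DR\otimes_R X_n$ has homology $D(R/(x_0,\dots,x_{n-1}))$ concentrated in degree $n$; in both cases the key point is that multiplication by $x_m$ is injective on $R/(x_0,\dots,x_{m-1})$, hence surjective on its $k$-dual, so the top homology at stage $m$ gets killed at stage $m+1$ and each fixed $\Tor_p$ vanishes in the colimit.
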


\begin{proof}
  We denote by $S$ the one-dimensional $R$-module $R/(x_0,x_1,\dots)$,
  and we shall show that $S$ is not in $\langle\Inj R\rangle$.
  
  It suffices to prove that $\Hom(X,S)=0$ for every object $X$ of
  $\langle\Inj R\rangle$, since clearly $\Hom(S,S)\neq0$.

  Since the class of objects $X$ such that $\Hom(X,S[t])=0$ for all
  $t\in\mathbb{Z}$ is a localizing subcategory of $\mathcal{D}(R)$, it
  suffices to prove that, for every injective module $I$ and every
  $t\geq0$, $\Ext^t_R(I,S)=\Hom(I,S[t])=0$ (where $\Ext^0$ denotes
  $\Hom$).
  
  Since $DR$ is an injective cogenerator of the module category, every
  injective $R$-module is a direct summand of some product $DR^J$ of
  copies of $DR$, so it suffices to prove that $\Ext^t_R(DR^J,S)=0$
  for every set $J$ and every $t\geq0$.

  Let $C_i$ be the chain complex
  $\dots\to0\to R\stackrel{x_i}{\to} R\to0\to\dots$ with the second
  non-zero term in degree zero, let $X_n=\bigotimes_{i<n}C_i$, so
  there are natural inclusions $X_1\to X_2\to\dots$, and let
  $X_\infty=\varinjlim_n X_n$. Then the homology of $X_n$ is
  $R/(x_0,\dots,x_{n-1})$ concentrated in degree zero, and so
  $X_\infty$ is a projective resolution of
  $k=\varinjlim_n R/(x_0,\dots,x_{n-1})$.

  Since $S$ is finite dimensional,
  $\Ext^t_R(DR^J,S)=D\Tor_t^R(DR^J,S)$, which is the dual of the
  degree $t$ homology of the chain complex
  $DR^J\otimes_RX_\infty=\varinjlim_n DR^J\otimes_RX_n$. Since $X_n$ is
  a complex of finitely generated $R$-modules,
  $$DR^J\otimes_RX_\infty=\varinjlim\left( (DR\otimes_RX_n)^J\right).$$

  So to prove that $\Ext^t_R(DR^J,k)=0$, it suffices to prove that
  $DR\otimes_RX_n$ has no homology in degree $t$ for sufficiently
  large $n$.

  We shall show that the homology of $DR\otimes_RX_n$ is
  $D\left(R/(x_0,\dots,x_{n-1})\right)$, concentrated in degree $n$.

  This is true for $n=0$. There is a short exact sequence
  $$0\longrightarrow R/(x_0,\dots,x_{m-1})\stackrel{x_m}{\longrightarrow}
  R/(x_0,\dots,x_{m-1})\longrightarrow
  R/(x_0,\dots,x_m)\longrightarrow0,$$ so if it is true for $n=m$ then
  it is true for $n=m+1$.
\end{proof}

There are also non-Noetherian commutative rings for which injectives
do generate. For example, if $R=k[x_0,x_1,\dots]/(x_0,x_1,\dots)^2$ is
the quotient of the ring considered above by quadratic polynomials,
then the methods of Section~\ref{sec:classes} can easily be adapted to
show that injectives generate for $R$.

\section{The finitistic dimension conjecture}
\label{sec:findim}

From now on we specialize to the case of a finite
dimensional algebra $A$ over a field $k$. As at the end of the previous
section, we use the notation $DV$ to denote the $k$-dual of a vector
space $V$. Also we denote by $\mod A$ the full subcategory of $\Mod A$
consisting of finitely generated modules.

One simplifying factor is that if $A$ is a finite dimensional algebra
then every injective $A$-module is a direct summand of a direct sum of
copies of $DA$, so $\langle\Inj A\rangle$ is generated by the single
object $DA$.

We recall the definition of the big and little finitistic dimensions
of a finite dimensional algebra $A$. If $M$ is an $A$-module then
$\pd(M)$ denotes the projective dimension of $M$.

\begin{defn}
  Let $A$ be a finite dimensional algebra.

  The {\bf big finitistic dimension} of $A$ is
  $$\FinDim(A)=\sup\left\{\pd(M)\mid
   M \in\Mod A\mbox{ and }\pd(M)<\infty\right\}.$$

 The {\bf little finitistic dimension} of $A$ is
 $$\findim(A)=\sup\left\{\pd(M)\mid
   M \in\mod A\mbox{ and }\pd(M)<\infty\right\}.$$
\end{defn}

The finitistic dimension conjecture (for finite dimensional algebras)
is the conjecture that $\findim(A)<\infty$. This was publicized as a
question by Bass~\cite{bass} in 1960 and is still open. The
corresponding question for the big finitistic dimension is also open,
and we shall distinguish between the two questions as follows.

\begin{defn}
  Let $A$ be a finite dimensional algebra.

  We say that $A$ {\bf satisfies the big finitistic dimension
    conjecture} if $\FinDim(A)<\infty$.

  We say that $A$ {\bf satisfies the little finitistic dimension conjecture} if
  $\findim(A)<\infty$.
\end{defn}

Clearly $\FinDim(A)\geq\findim(A)$ and so if $A$ satisfies the big
finitistic dimension conjecture then it satisfies the little
finitistic dimension conjecture.

Huisgen-Zimmermann~\cite{huisgen:decades} gives a good
survey of the finitistic dimension conjecture and its history.

\begin{thm}\label{thm:findim}
  Let $A$ be a finite dimensional algebra over a field $k$. If
  injectives generate for $A$ then $A$ satisfies the big finitistic
  dimension conjecture (and hence also the little finitistic dimension
  conjecture).
\end{thm}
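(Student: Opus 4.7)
The plan is to argue by contrapositive: I will assume $\FinDim(A)=\infty$ and exhibit a nonzero object $Y\in\mathcal{D}(A)$ such that $\Hom_{\mathcal{D}(A)}(DA,Y[t])=0$ for every $t\in\mathbb{Z}$. Since $\mathcal{D}(A)$ is compactly generated (by the single object $A$), a localizing subcategory coincides with the whole of $\mathcal{D}(A)$ precisely when its right orthogonal vanishes; so producing such a $Y$ contradicts the hypothesis that $\langle\Inj A\rangle=\langle DA\rangle$ is all of $\mathcal{D}(A)$.

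By the failure assumption, I would pick right $A$-modules $M_n$ with $\pd(M_n)=d_n<\infty$ and $d_n\to\infty$, and fix projective resolutions $P_n\to M_n$ realised as bounded complexes of projectives concentrated in degrees $[-d_n,0]$. The candidate $Y$ is to be built from shifted copies $P_n[a_n]$ with $a_n\to\infty$ chosen so that the ``tops'' of these resolutions move off to $-\infty$, either as the coproduct $\bigoplus_n P_n[a_n]$ or, more likely, as the mapping telescope of a system of maps $P_n[a_n]\to P_{n+1}[a_{n+1}]$ engineered so that each Ext class of $DA$ into the summands is progressively annihilated along the telescope.

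With $Y$ in hand, two verifications remain. First, $Y$ must be nonzero, which I would establish by exhibiting a single cohomology group $H^i(Y)$ that picks up a nontrivial contribution from one of the $M_n$. Second, and the heart of the proof, one checks $\Hom_{\mathcal{D}(A)}(DA,Y[t])=0$ for every $t\in\mathbb{Z}$: for each fixed $t$ and all sufficiently large $n$, the support of $P_n[a_n]$ lies far enough from degree $-t$ that $\Hom_{\mathcal{D}(A)}(DA,P_n[a_n+t])=0$ by direct inspection of the bounded resolution, and the telescope structure propagates this vanishing to the whole of $Y$ (using, in the colimit, the closure properties of Proposition~\ref{prop:loc}).

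The main obstacle is calibrating the shifts $a_n$ and the transition maps so these two requirements coexist. Naive constructions such as $\bigoplus_n P_n[d_n]$ typically either collapse to zero in $\mathcal{D}(A)$ or still carry nonvanishing $\Hom$ into $DA$ through the coproduct. This is sharpened by the fact that $DA$ is generally not compact in $\mathcal{D}(A)$ (it is compact exactly when $\pd(DA)<\infty$), so $\Hom_{\mathcal{D}(A)}(DA,-)$ need not commute with arbitrary coproducts or homotopy colimits. The unboundedness $d_n\to\infty$ supplies the ``room'' in the cohomological degree to drive each individual contribution to zero, but translating this uniformity into a global vanishing on $Y$ while keeping $Y$ nonzero is the delicate step that the rest of the argument must handle.
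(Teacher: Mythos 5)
Your overall framing is right---argue by contrapositive, pick modules $M_n$ of unbounded finite projective dimension, and produce a nonzero object in the right orthogonal of $DA$---and you have correctly put your finger on the central obstacle: $DA$ is not compact, so $\Hom_{\mathcal{D}(A)}(DA,-)$ does not commute with coproducts, and a naive coproduct or telescope of shifted resolutions will not obviously land in $DA^\perp$ while staying nonzero. But you leave exactly this ``delicate step'' unresolved, and the construction you gesture at (a mapping telescope with carefully calibrated shifts killing Ext classes one at a time) is not what makes the argument go through. As written, the proposal is a plan with a hole where the proof should be.

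The paper's solution is a different and more rigid construction that sidesteps the compactness issue entirely. Take the minimal projective resolutions $P_i$ of the $M_i$, shift each to $P_i[-d_i]$ (so its cohomology $M_i$ sits in degree $d_i$), and consider the natural map $\iota:\bigoplus_i P_i[-d_i]\to\prod_i P_i[-d_i]$. Because the $d_i$ are distinct, $\iota$ is a quasi-isomorphism, but applying $-\otimes_A(A/\!\rad A)$ and using minimality shows the degree-zero cohomology map is $\bigoplus_i\Tor^A_{d_i}(M_i,A/\!\rad A)\to\prod_i\Tor^A_{d_i}(M_i,A/\!\rad A)$, which fails to be an isomorphism, so $\iota$ is not a homotopy equivalence. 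The cone $C$ is therefore a bounded below complex of projectives that is acyclic but not contractible. Now apply the Nakayama equivalence $-\otimes_A DA$ from projectives to injectives: $C\otimes_A DA$ is a bounded below complex of injectives, still not contractible, hence \emph{not acyclic}, because a bounded below acyclic complex of injectives is automatically contractible. That gives nonvanishing. For the orthogonality, acyclicity of $C$ gives $\Hom_{\mathcal{K}(A)}(A,C[t])=0$, the equivalence transports this to $\Hom_{\mathcal{K}(A)}(DA,C\otimes_ADA[t])=0$, and K-injectivity of bounded below complexes of injectives upgrades this to $\Hom_{\mathcal{D}(A)}(DA,C\otimes_ADA[t])=0$. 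The three ingredients you are missing are precisely: the cone of the canonical $\bigoplus\to\prod$ map as the source of an acyclic-but-not-contractible bounded below complex of projectives; the Nakayama functor $-\otimes_ADA$ to convert it to injectives; and the fact that bounded below acyclic complexes of injectives are contractible, which is what keeps the object nonzero. Without these, the plan does not close.
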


\begin{proof}
  Suppose that $A$ does not satisfy the big finitistic dimension
  conjecture. Then there is an infinite family
  $\left\{M_i\mid i\in I\right\}$ of nonzero $A$-modules with
  $\pd(M_i)\neq\pd(M_j)$ for $i\neq j$. Let $d_i=\pd(M_i)$.

  Let $P_i$ be a minimal projective resolution of $M_i$. Then
  $P_i[-d_i]$, considered as a cochain complex, is zero except in
  degrees $0$ to $d_i$, and has cohomology concentrated in degree
  $d_i$.

  Both $\bigoplus_iP_i[-d_i]$ and $\prod_iP_i[-d_i]$ have homology
  isomorphic to $M_i$ in degree $d_i$, and the natural inclusion
$$\iota:\bigoplus_iP_i[-d_i]\to\prod_iP_i[-d_i]$$
is a quasi-isomorphism.

However, $\iota$ is not a homotopy equivalence, since if it were then
applying any additive functor to $\iota$ would give a
quasi-isomorphism. Note that $P_i[-d_i]\otimes_A(A/\!\rad(A))$ has
nonzero cohomology $\Tor^A_{d_i}\left(M_i,A/\!\rad(A)\right)$ in
degree zero. Since taking the tensor product with a finitely presented
module preserves both products and coproducts, the map on degree zero
cohomology induced by $\iota\otimes_A\left(A/\!\rad(A)\right)$ is the
natural map
$$\bigoplus_i\Tor^A_{d_i}\left(M_i,A/\!\rad(A)\right)\to
\prod_i\Tor^A_{d_i}\left(M_i,A/\!\rad(A)\right),$$
which is not an isomorphism.

Thus, if $C$ is the mapping cone of $\iota$, then $C$ is a bounded
below complex of projective $A$-modules that is acyclic but not
contractible.

The functor $-\otimes_ADA$ is an equivalence from the category of
projective $A$-modules to the category of injective
$A$-modules. Applying this functor to $C$, we get a bounded below
complex $C\otimes_ADA$ of injective $A$-modules that is not
contractible (since $-\otimes_ADA$ is an equivalence), and so not
acyclic, since a bounded below complex of injectives is acyclic if and
only if it is contractible.

Since $C$ is acyclic, $\Hom_{\mathcal{K}(A)}(A,C[t])=0$, and so
$$\Hom_{\mathcal{K}(A)}(DA,C\otimes_ADA[t])=0,$$ 
for all $t\in\mathbb{Z}$.

Since $C\otimes_ADA$ is a bounded below complex of injectives,
$$\Hom_{\mathcal{D}(A)}(DA,C\otimes_ADA[t])\cong
\Hom_{\mathcal{K}(A)}(DA,C\otimes_ADA[t])=0.$$ But the objects $X$ of
$\mathcal{D}(A)$ such that
$\Hom_{\mathcal{D}(A)}(X,C\otimes_ADA[t])=0$ for all $t$ form a
localizing subcategory of $\mathcal{D}(A)$, and
therefore $C\otimes_ADA$ is not in $\langle\Inj A\rangle$. So injectives
do not generate for $A$.
\end{proof}

The proof of the last theorem also provides an equivalent form of the
big finitistic dimension conjecture that is similar to an equivalent
form of ``injectives generate'' that follows by a standard application
of Bousfield localization.

We define the {\bf right perpendicular category} $DA^\perp$ of $DA$ to
be the full subcategory of $\mathcal{D}(A)$ consisting of the objects
$X$ such that, for all $t\in\mathbb{Z}$,
$\Hom_{\mathcal{D}(A)}(DA,X[t])=0$. As usual, $\mathcal{D}^+(A)$
denotes the full triangulated subcategory of $\mathcal{D}(A)$
consisting of complexes with cohomology bounded below in degree.

\begin{thm}\label{thm:perp}
Let $A$ be a finite dimensional algebra over a field $k$.
\begin{enumerate}
\item[(a)] Injectives generate for $A$ if and only if
  $DA^\perp=\{0\}$.
\item[(b)] $A$ satisfies the big finitistic dimension conjecture if
  and only if
  $$DA^\perp\cap\mathcal{D}^+(A)=\{0\}.$$
\end{enumerate}
\end{thm}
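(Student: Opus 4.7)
The plan is to handle the two parts separately. Part (a) follows from a standard Bousfield localization, whereas part (b) requires a more hands-on argument using the Nakayama functor and a syzygy count.

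For (a), note that $\langle\Inj A\rangle = \langle DA\rangle$. If injectives generate, then any $X \in DA^\perp$ is orthogonal to every object of $\langle DA\rangle = \mathcal{D}(A)$, in particular to itself, and so $X=0$. Conversely, Bousfield localization (which applies since $\mathcal{D}(A)$ is compactly generated) produces for each $X \in \mathcal{D}(A)$ a triangle $L \to X \to R \to L[1]$ with $L \in \langle DA\rangle$ and $R \in DA^\perp$; the assumption $DA^\perp = \{0\}$ then forces $R=0$, hence $X \cong L \in \langle DA\rangle$.

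For part (b), the direction $\FinDim(A)=\infty \Rightarrow DA^\perp \cap \mathcal{D}^+(A) \neq \{0\}$ is essentially already contained in the proof of Theorem~\ref{thm:findim}: the object $C\otimes_A DA$ built there is a nonzero bounded below complex of injectives which, by the $\Hom$ computation performed, lies in $DA^\perp$. For the converse, given $X \in DA^\perp \cap \mathcal{D}^+(A)$ I would replace $X$ by a bounded below complex $X^\bullet$ of injectives and apply $\Hom_A(DA,-)$ term by term to produce a bounded below complex $Y^\bullet$ of projectives whose cohomology equals $\Ext^t_A(DA,X)$ and thus vanishes. So $Y^\bullet$ is a bounded below acyclic complex of projectives. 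Setting $Z^n := \ker d^n$ and splicing the short exact sequences $0 \to Z^{n-1} \to Y^{n-1} \to Z^n \to 0$ gives a finite projective resolution of $Z^n$ in which the $d$-th syzygy equals $Z^{n-d}$. Since $\pd(Z^n)<\infty$, the assumption $\FinDim(A) \leq d$ forces $\pd(Z^n) \leq d$, and thus $Z^{n-d}$ is projective. Applied to all $n \geq d$, this makes every $Z^m$ projective, each sequence $0 \to Z^m \to Y^m \to Z^{m+1} \to 0$ split, and $Y^\bullet$ contractible. Transporting back via the additive equivalence $-\otimes_A DA : \add A \to \add DA$ (inverse to $\Hom_A(DA,-)$ on additive categories) makes $X^\bullet$ contractible too, so $X = 0$ in $\mathcal{D}(A)$.

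The conceptual heart of the argument is the observation that $\Hom_A(DA,-)$, applied term by term to a bounded below complex of injectives, computes $R\Hom_A(DA,-)$ and lands in bounded below complexes of projectives, giving a term-wise inverse to $-\otimes_A DA$. That is the main subtlety; once this translation is set up, the bound $\FinDim(A) \leq d$ does the rest through an essentially standard syzygy count.
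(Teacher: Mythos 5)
Your proof is correct and takes essentially the same approach as the paper: part (a) by Bousfield localization, and part (b) by applying $\Hom_A(DA,-)$ termwise to a bounded below complex of injectives to obtain an acyclic bounded below complex of projectives and then counting syzygies. The only difference is cosmetic, in how (b) is finished: you argue directly that $\FinDim(A)\leq d$ forces every kernel $Z^m$ of $Y^\bullet$ to be projective, hence $Y^\bullet$ is contractible and $X\cong 0$, whereas the paper argues contrapositively, first normalizing so that $X^0\to X^1$ is not a split monomorphism and then exhibiting modules of arbitrarily large finite projective dimension; your version dispenses with the normalization step but is otherwise the same syzygy calculation read in the opposite direction.
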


\begin{proof} If $X$ is an object of $DA^\perp$, then the class
  $^\perp X$ of objects $Y$ for which $\Hom(Y,X[t])=0$ for all
  $t\in\mathbb{Z}$ is a localizing subcategory of $\mathcal{D}(A)$
  containing $DA$, so if injectives generate for $A$ then $X$ is in
  $^\perp X$, and so the existence of the identity map for $X$ shows
  that $X\cong0$. The converse of part (a) is just ``Bousfield
  localization''~\cite{bousfield}. For any object $X$ of
  $\mathcal{D}(A)$, there is a distinguished triangle
$$\Gamma X\to X\to LX\to \Gamma X[1]$$
such that $\Gamma X$ is in $\langle\Inj A\rangle$ and $LX$ is in
$DA^\perp$. If $DA^\perp=\{0\}$ then for every object $X$ of
$\mathcal{D}(A)$,
$X\cong \Gamma X\in\langle\Inj A\rangle$ and so injectives generate for $A$.

That Bousfield localization applies in this situation (or more
generally for a localizing subcategory generated by a single object in
a compactly generated triangulated category) seems to be well-known,
an explicit proof can be found in~\cite[Proposition
5.7]{alonsotarrio_et_al} or, for a more general statement,
in~\cite[Theorem 7.2.1]{krause}.

For (b), the proof of Theorem~\ref{thm:findim} shows that if $A$ does not
satisfy the big finitistic dimension conjecture then there is a
nonzero object of $\mathcal{D}^+(A)$ in $DA^\perp$.

The argument for the converse emerged from a conversation with Henning
Krause. Suppose $X:=X^\ast$ is a nonzero object of
$DA^\perp\cap\mathcal{D}^+(A)$. Without loss of generality, $X$ is a
complex of injectives concentrated in positive degrees and
$X^0\to X^1$ is not a split monomorphism. Since $X$ is a bounded
below complex of injectives,
$$\Hom_{\mathcal{K}(A)}(DA,X[t])\cong\Hom_{\mathcal{D}(A)}(DA,X[t])=0$$
for all $t\in\mathbb{Z}$.

Applying the equivalence $\Hom_A(DA,-)$ between injective and
projective $A$-modules to $X$ we get a complex
$Y^\ast:=\Hom_A(DA,X)$ of projective $A$-modules concentrated in
non-negative degrees such that, for all $t\in\mathbb{Z}$,
$\Hom_{\mathcal{K}(A)}(A,Y^\ast[t])=0$ (i.e., $Y^\ast$ is
acyclic) and such that $Y^0\to Y^1$ is not a split monomorphism.

Hence, for any $t>0$
$$\sigma^{\leq t}Y:= \dots\to 0\to Y^0\to Y^1\to\dots\to Y^t\to0\to\dots$$
is the projective resolution of an $A$-module with projective
dimension $t$, and so $A$ does not satisfy the big finitistic
dimension conjecture.
\end{proof}

\section{The colocalizing subcategory generated by projectives}
\label{sec:colocalizing}

As well as considering whether $\mathcal{D}(R)$ is generated as a
triangulated category with arbitrary small coproducts by the injective
modules, it makes sense to ask the dual question: is $\mathcal{D}(R)$
generated as a triangulated category with arbitrary small products by
the projective modules.

For general rings there seems to be no direct connection between the
two questions, but for finite dimensional algebras there is one
implication.

\begin{prop}
  Let $A$ be a finite dimensional algebra over a field for which
  $\mathcal{D}(A)$ is generated by the injective modules as a
  triangulated category with arbitrary coproducts. Then
  $\mathcal{D}(A^{op})$ is generated by the projective modules as a
  triangulated category with arbitrary products.
\end{prop}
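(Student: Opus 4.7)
The plan is to show that the smallest triangulated subcategory $\mathcal{E}$ of $\mathcal{D}(A^{op})$ containing all projective $A^{op}$-modules and closed under arbitrary products equals $\mathcal{D}(A^{op})$. My strategy mirrors the proof of Proposition~\ref{prop:projgen} with ``coproduct'' replaced by ``product'' and ``direct limit'' by ``inverse limit'' throughout, and uses the $k$-duality $D=\Hom_k(-,k)$ to transfer the hypothesis ``injectives generate for $A$'' to the $A^{op}$-side.

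First I would verify dual analogues of parts (e)--(h) of Proposition~\ref{prop:loc} for $\mathcal{E}$: closure under direct summands (by a product-version Eilenberg swindle, using the short exact sequence $0\to X\to\prod_n(X\oplus Y)\to\prod_n(X\oplus Y)\to 0$ obtained from the decomposition $\prod_n(X\oplus Y)\cong X\oplus\prod_n(X\oplus Y)$); closure under homotopy inverse limits of sequences of maps (via the Milnor triangle $\mathrm{holim}\,X_n\to\prod X_n\xrightarrow{1-\mathrm{shift}}\prod X_n$); closure under bounded complexes of $\mathcal{E}$-objects (induction on length); and, by expressing any bounded below complex as the inverse limit of its brutal truncations $\sigma^{\leq n}$ along a tower of surjections, closure under bounded below complexes of $\mathcal{E}$-objects.

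Next, the critical step is to show that the injective cogenerator $J:=D(A_A)$ of $\Mod A^{op}$ lies in $\mathcal{E}$. Because $A$ is finite-dimensional, every injective right $A$-module $I$ can be written as $\bigoplus_j I_j$ with each $I_j$ a finite-dimensional summand of $DA$, so $DI=\prod_j DI_j$ is a product of finitely generated projective $A^{op}$-modules and thus lies in $\mathcal{E}$. Since $D:\mathcal{D}(A)\to\mathcal{D}(A^{op})$ is an exact contravariant functor sending coproducts to products and triangles to triangles, the subcategory $\mathcal{F}:=\{X\in\mathcal{D}(A):DX\in\mathcal{E}\}$ is a localizing subcategory of $\mathcal{D}(A)$ containing $\Inj A$; so by the hypothesis $\mathcal{F}=\mathcal{D}(A)$, and in particular $D(A_A)=J\in\mathcal{E}$.

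The rest is a bootstrap using the closure properties from the first step: every injective $A^{op}$-module is a direct summand of some product $J^I$ by the cogenerator property, hence is in $\mathcal{E}$; every $A^{op}$-module is then in $\mathcal{E}$ via its injective resolution (a bounded below complex of injectives); and every complex $X\in\mathcal{D}(A^{op})$ is the homotopy inverse limit of its good truncations $\tau^{\geq n}X$ along a surjective tower, each truncation being a bounded below complex of $\mathcal{E}$-modules and therefore in $\mathcal{E}$. The main obstacle is that $D$ is not a triangulated equivalence on infinite-dimensional or unbounded complexes, so one cannot simply transport ``injectives generate'' across $D$; the key point is that $D$ is only needed to place the single object $J$ into $\mathcal{E}$, from which the dual closure properties take over.
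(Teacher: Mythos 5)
Your proof is correct and follows essentially the same route as the paper: the core step is defining the localizing subcategory $\mathcal{F}=\{X\in\mathcal{D}(A):DX\in\mathcal{E}\}$ (the paper's $\mathcal{X}$), observing it contains the injectives, and using the hypothesis to conclude $A_A\in\mathcal{F}$, hence $D(A_A)\in\mathcal{E}$. You spell out the dual closure properties and the bootstrap from the injective cogenerator in more detail than the paper, which states them as known, but the argument is the same.
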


\begin{proof}
  Consider the full subcategory $\mathcal{X}$ of $\mathcal{D}(A)$
  consisting of the objects $X$ for which the dual $DX$ is in the
  colocalizing subcategory of $\mathcal{D}(A^{op})$ generated by
  projectives. Then $\mathcal{X}$ is a localizing subcategory of
  $\mathcal{D}(A)$ that contains $DA$, and so
  $\mathcal{X}=\mathcal{D}(A)$. Hence $DA$ is in the colocalizing
  subcategory of $\mathcal{D}(A^{op})$ generated by projectives, and
  $DA$ generates $\mathcal{D}(A^{op})$ as a triangulated category with
  arbitrary products.
\end{proof}

In fact, the proof of Theorem~\ref{thm:findim} can be adapted to show
that this potentially weaker condition is sufficient to prove the
finitistic dimension conjecture.

\begin{prop}
  Let $A$ be a finite dimensional algebra over a field. If projectives
  generate $\mathcal{D}(A^{op})$ as a triangulated category with
  arbitrary products, then $A$ satisfies the big finitistic dimension
  conjecture.
\end{prop}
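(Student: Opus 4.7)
The plan is to dualize the construction in the proof of Theorem~\ref{thm:findim}, interchanging projectives with injectives, direct sums with products, and bounded below complexes with bounded above complexes throughout. Suppose $A$ fails the big finitistic dimension conjecture; then, as in the original proof, there are right $A$-modules $M_i$ of pairwise distinct finite projective dimensions $d_i$. Setting $N_i:=DM_i$ yields right $A^{op}$-modules of injective dimension $d_i$, and I take minimal injective coresolutions $J_i$ concentrated in degrees $0$ through $d_i$.

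The key step is to form the natural map
$$\iota\colon\bigoplus_i J_i[d_i]\longrightarrow\prod_i J_i[d_i]$$
in $\mathcal{D}(A^{op})$. Both sides have cohomology $N_i$ concentrated in degree $-d_i$, so $\iota$ is a quasi-isomorphism. To show $\iota$ is not a homotopy equivalence, apply $\Hom_{A^{op}}(A/\!\rad A,-)$, which commutes with arbitrary products and (because $A/\!\rad A$ is finitely generated) with arbitrary coproducts. Minimality of the $J_i$ makes the resulting complexes have zero differentials, so they compute $\Ext^{*}_{A^{op}}(A/\!\rad A, N_i)$; the adjunction $\Hom_{A^{op}}(X,DM)\cong D(M\otimes_A X)$ identifies the induced map on degree-zero cohomology with the natural map
$$\bigoplus_i D\Tor^A_{d_i}(M_i, A/\!\rad A)\longrightarrow\prod_i D\Tor^A_{d_i}(M_i, A/\!\rad A),$$
whose terms are nonzero exactly as in the proof of Theorem~\ref{thm:findim}, so this map is not an isomorphism.

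The mapping cone $C$ of $\iota$ is then a bounded above, acyclic, non-contractible complex of injective $A^{op}$-modules. The analogue of the equivalence $-\otimes_A DA$ in the $A^{op}$ setting---namely the equivalence with inverse $\Hom_{A^{op}}(DA,-)$ between injective and projective $A^{op}$-modules---carries $C$ to a bounded above, non-contractible complex $Z$ of projective $A^{op}$-modules. Since a bounded above acyclic complex of projectives is contractible, $Z$ cannot be acyclic, so $Z\neq 0$ in $\mathcal{D}(A^{op})$.

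Finally, I would verify $\Hom_{\mathcal{D}(A^{op})}(Z[s],P)=0$ for every projective $A^{op}$-module $P$ and every integer $s$. Being K-projective as a bounded above complex of projectives, $Z$ satisfies $\Hom_{\mathcal{D}(A^{op})}(Z[s],P)=\Hom_{\mathcal{K}(A^{op})}(Z[s],P)$; under the $\mathcal{K}$-level equivalence this equals a Hom from $C$ into the image of $P$, which is an injective module (a summand of a direct sum of copies of $DA$, a direct sum which is injective because $A$ is Noetherian), so exactness of $\Hom_{A^{op}}(-,\textrm{injective})$ applied to the acyclic $C$ delivers the vanishing. The colocalizing subcategory $\{X:\Hom_{\mathcal{D}(A^{op})}(Z[s],X)=0\text{ for all }s\}$ therefore contains every projective $A^{op}$-module and, by hypothesis, equals $\mathcal{D}(A^{op})$; taking $X=Z$ forces $\id_Z=0$, hence $Z=0$, a contradiction. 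The main subtlety, just as in the original theorem, is arranging the construction so that non-contractibility survives the dualization --- this is handled cleanly by the homotopy-level equivalence between complexes of projectives and complexes of injectives.
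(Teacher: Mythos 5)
Your proof is correct and follows essentially the same route as the paper: dualize the $M_i$, form the mapping cone $C$ of the coproduct-to-product inclusion of shifted minimal injective resolutions, pass to a bounded-above complex $Z=\Hom_{A^{op}}(DA,C)$ of projectives via the equivalence between injective and projective $A^{op}$-modules, and check the vanishing of $\Hom_{\mathcal{D}(A^{op})}(Z,P[t])$ to see $Z$ lies outside the colocalizing subcategory generated by projectives. (One small slip: the functor carrying $C$ to $Z$ is $\Hom_{A^{op}}(DA,-)$ itself, not ``the equivalence with inverse $\Hom_{A^{op}}(DA,-)$''; the rest, including the non-contractibility check via $\Hom(A/\!\rad A,-)$ and the Noetherian fact that coproducts of injectives are injective, is carried out correctly and fills in details the paper only sketches.)
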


We'll only sketch the proof, which is very similar to that of
Theorem~\ref{thm:findim}.

\begin{proof}
  If there is an infinite family $\left\{M_i\mid i\in I\right\}$ of
  nonzero $A$-modules with $\pd(M_i)=d_i\neq d_j=\pd(M_j)$ for
  $i\neq j$ then $\left\{DM_i\mid i\in I\right\}$ is a family of
  nonzero $A^{op}$-modules with $\id(DM_i)=d_i$, so
  $\id(DM_i)\neq\id(DM_j)$ for $i\neq j$.

  Let $I_i$ be a minimal injective resolution of $DM_i$. Then the
  mapping cone $C$ of the natural inclusion
$$\bigoplus_iI_i[d_i]\to\prod_iI_i[d_i]$$
is a bounded above acyclic but not contractible complex of injective
$A^{op}$-modules. Since $C$ is acyclic,
$\Hom_{\mathcal{K}(A^{op})}(C,DA[t])=0$, and so (applying the
equivalence between injective and projective modules)
$$\Hom_{\mathcal{K}(A^{op})}\left(\Hom_{A^{op}}(DA,C), A[t]\right)=0$$
for all $t\in\mathbb{Z}$.

Since $\Hom_{A^{op}}(DA,C)$ is a bounded above complex of projective
$A^{op}$-modules,
$$\Hom_{\mathcal{D}(A^{op})}\left(\Hom_{A^{op}}(DA,C), A[t]\right)=
\Hom_{\mathcal{K}(A^{op})}\left(\Hom_{A^{op}}(DA,C), A[t]\right)=0.$$

But the objects $X$ such that
$\Hom_{\mathcal{D}(A^{op})}\left(\Hom_{A^{op}}(DA,C), X[t]\right)=0$
for all $t\in\mathbb{Z}$ form a colocalizing subcategory of
$\mathcal{D}(A^{op})$, and therefore the colocalizing subcategory
generated by $A$ does not contain $\Hom_{A^{op}}(DA,C)$.
\end{proof}

The only real reason for focusing on generation by injectives in this
paper, rather than the dual condition, is that localizing
subcategories of $\mathcal{D}(A)$ and other triangulated categories
are more familiar than colocalizing subcategories, and have received
more attention.

\section{An example}
\label{sec:example}

In this section we shall prove that injectives generate for a
particular example of a finite dimensional algebra $A$ to illustrate
some of the elementary methods that can be used.

Our strategy will be to explicitly build up a collection of objects of
$\langle\Inj A\rangle$ until we have shown that all the simple
$A$-modules are in $\langle\Inj A\rangle$.

\begin{lemma}\label{lem:simples}
  If $A$ is a finite dimensional algebra, and every simple $A$-module
  is in $\langle\Inj A\rangle$, then injectives generate for $A$.
\end{lemma}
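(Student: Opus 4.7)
The plan is to reduce the statement to showing that every $A$-module lies in $\langle\Inj A\rangle$, and then bootstrap from simples to all modules in two stages: first to finite-dimensional modules via composition series, then to arbitrary modules via free resolutions. Once every module is in $\langle\Inj A\rangle$, the truncation argument from the proof of Proposition~\ref{prop:projgen} (i.e.\ combining parts (g) and (h) of Proposition~\ref{prop:loc}) gives $\langle\Inj A\rangle = \mathcal{D}(A)$.

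First I would handle finite-dimensional modules. Since $A$ is a finite dimensional algebra, every finitely generated $A$-module has finite length, and hence admits a composition series whose subquotients are simple. By hypothesis each simple lies in $\langle\Inj A\rangle$, and from the short exact sequences extracted from a composition series, Proposition~\ref{prop:loc}(a) combined with an easy induction on length shows that every finite-dimensional $A$-module lies in $\langle\Inj A\rangle$. In particular, the regular module $A_A$ itself is finite-dimensional, hence $A_A \in \langle\Inj A\rangle$; by Proposition~\ref{prop:loc}(d) every free $A$-module lies in $\langle\Inj A\rangle$.

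Next I would extend to an arbitrary $A$-module $M$ by choosing a free resolution
$$\dots \longrightarrow F_1 \longrightarrow F_0 \longrightarrow 0,$$
viewed as a bounded-above cochain complex $F^\ast$ with $F^{-n} = F_n$. Each term $F_i$ is free, hence lies in $\langle\Inj A\rangle$ by the previous step, so Proposition~\ref{prop:loc}(h) gives $F^\ast \in \langle\Inj A\rangle$. Since $M$ is quasi-isomorphic to $F^\ast$, Proposition~\ref{prop:loc}(c) yields $M \in \langle\Inj A\rangle$.

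Finally I would close out exactly as in Proposition~\ref{prop:projgen}: an arbitrary complex $X^\ast \in \mathcal{D}(A)$ is the direct limit of its good truncations $\tau^{\leq n}X^\ast$, each of which is a bounded-above complex of modules and therefore lies in $\langle\Inj A\rangle$ by (h), so $X^\ast \in \langle\Inj A\rangle$ by (g). Hence $\langle\Inj A\rangle = \mathcal{D}(A)$. There is no real obstacle in the argument: the content is entirely formal once one notices that finite-dimensionality of $A$ lets us promote the hypothesis on simples to the regular module, after which free resolutions and Proposition~\ref{prop:loc} do all the remaining work.
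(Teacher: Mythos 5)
Your proof is correct, but it takes a slightly different route from the paper and is longer than it needs to be. The paper argues directly on modules: semisimple modules are coproducts of simples, hence lie in $\langle\Inj A\rangle$, and then an induction on the radical length of $M$ (possible for \emph{arbitrary} modules, finitely generated or not, because $\rad(A)^n=0$ for some $n$) puts every $A$-module in $\langle\Inj A\rangle$ in one stroke. Your composition-series argument only reaches finite-dimensional modules, which forces you into the extra free-resolution step for arbitrary $M$; the radical filtration sidesteps that entirely. Your final two paragraphs essentially re-prove Proposition~\ref{prop:projgen} inline. Once you have established that $A_A\in\langle\Inj A\rangle$, you could have stopped immediately by citing the remark after Proposition~\ref{prop:projgen}: the single object $A_A$ already generates $\mathcal{D}(A)$, so any localizing subcategory containing $A_A$ is all of $\mathcal{D}(A)$. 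So the proposal is sound, just more roundabout than necessary; the paper's radical-filtration step is the cleaner uniform argument.
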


\begin{proof}
  Every semisimple module is a coproduct of simple modules, and
  therefore in the localizing subcategory generated by simple
  modules. So, by induction on the radical length, every module is in
  the localizing subcategory generated by simple modules.
\end{proof}

The algebra $A$ that we shall consider is borrowed from an example in
a paper of Goodearl and Huisgen-Zimmermann~\cite{goodearl_huisgen}
that we shall discuss further in the next section. There is no
particular motive behind choosing this algebra for our example, and in
fact, owing to differences in left/right conventions, we actually
consider the opposite algebra to the one that they studied.

$A$ is a binomial relation algebra: the quotient $A=kQ/I$ of the path
algebra of a quiver $Q$ (with six vertices labelled $1,\dots,6$) by an
admissible ideal $I$ generated by some paths and some elements of the
form $p-\lambda q$, where $p$ and $q$ are paths and $\lambda$ is a
nonzero scalar. For vertex $i$, the corresponding idempotent (trivial
path) of $A$ will be denoted by $e_i$, the indecomposable projective
$e_iA$ by $P_i$, the indecomposable injective $D(Ae_i)$ by $I_i$, and
the simple module $P_i/\rad(P_i)\cong\soc(I_i)$ by $S_i$ .

We use diagrams to describe modules in a similar way
to~\cite{goodearl_huisgen}. The indecomposable projective modules
$P_1,\dots,P_6$ are described by the following diagrams.

$$\xymatrix@R12pt@C6pt{
&1\ar@{-}[ld]\ar@{-}[rd]&\\
2\ar@{-}[d]&&3\ar@{-}[lld]\ar@{-}[d]\\
5\ar@{-}[rd]&&4\ar@{-}[ld]\\
&6&\\
}\quad
\xymatrix@R12pt@C6pt{
&2\ar@{-}[ld]\ar@{-}[rd]&\\
1\ar@{-}[d]&&5\ar@{-}[lld]\ar@{-}[d]\\
3\ar@{-}[rd]&&6\ar@{-}[ld]\\
&4&\\
}\quad
\xymatrix@R12pt@C6pt{
&3\ar@{-}[ld]\ar@{-}[d]\ar@{-}[rd]&\\
5\ar@{-}[d]\ar@{-}[rrd]&1\ar@{-}[rd]&4\ar@{-}[lld]\ar@{-}[d]\\
6&&2\\
}\quad
\xymatrix@R12pt@C6pt{
&4\ar@{-}[ld]\ar@{-}[rd]&\\
2\ar@{-}[d]\ar@{-}[rrd]&&6\ar@{-}[lld]\ar@{-}[d]\\
5\ar@{-}[rd]&&1\ar@{-}[ld]\\
&3&\\
}\quad
\xymatrix@R12pt@C6pt{
&5\ar@{-}[ld]\ar@{-}[d]\ar@{-}[rd]&\\
6\ar@{-}[d]\ar@{-}[rrd]&2\ar@{-}[ld]&3\ar@{-}[lld]\ar@{-}[d]\\
1&&4\\
}\quad
\xymatrix@R12pt@C6pt{
&6\ar@{-}[ld]\ar@{-}[d]\ar@{-}[rd]&\\
5\ar@{-}[d]\ar@{-}[rrd]&1\ar@{-}[ld]\ar@{-}[rd]&4\ar@{-}[d]\\
3&&2\\
}
$$

The numbers represent the elements of a basis for the module, each of
which is annihilated by all but one of the idempotents $e_i$, that one
being the one with the same number. Lines joining two numbers indicate
that an arrow of $Q$ maps the basis element nearer the top of the
diagram to a nonzero scalar multiple of the basis element nearer the
bottom. For all pairs $(i,j)$ of vertices the quiver $Q$ has at most
one arrow from vertex $i$ to vertex $j$, so there is never any
ambiguity about which arrow is meant. The absence of a line indicates
that a corresponding arrow acts as zero. So the top two layers of the
diagrams for the indecomposable projectives tells us what the quiver
is (e.g., there are two arrows starting at vertex $1$, one of which
ends at vertex $2$ and one at vertex $3$), and the rest of the diagram
determines the relations apart from the precise values of the scalars
$\lambda$, which we shall not need to know.

Not every module can be represented by such a diagram, but all the
modules we need to consider can be, and in a way that determines the
module up to isomorphism.

The projectives $P_1=I_6$, $P_2=I_4$ and $P_4=I_3$ are also injective,
and the other indecomposable injectives $I_1$, $I_2$ and $I_5$ are
described by the following diagrams.

$$\xymatrix@R12pt@C6pt{
4\ar@{-}[d]\ar@{-}[rd]&&5\ar@{-}[lld]\ar@{-}[ld]\ar@{-}[d]\\
6\ar@{-}[rd]&2\ar@{-}[d]&3\ar@{-}[ld]\\
&1&\\
}\quad
\xymatrix@R12pt@C6pt{
3\ar@{-}[d]\ar@{-}[rd]\ar@{-}[rrd]&&6\ar@{-}[lld]\ar@{-}[ld]\ar@{-}[d]\\
5\ar@{-}[rd]&1\ar@{-}[d]&4\ar@{-}[ld]\\
&2&\\
}\quad
\xymatrix@R12pt@C6pt{
1\ar@{-}[d]\ar@{-}[rd]&&4\ar@{-}[ld]\ar@{-}[d]\\
3\ar@{-}[rd]&2\ar@{-}[d]&6\ar@{-}[ld]\\
&5&\\
}
$$

Let
$$M_a=\vcenter{\xymatrix@R12pt@C6pt{
4\ar@{-}[d]\\
6\\
}}\quad\quad\mbox{and}\quad\quad
M_b=\vcenter{\xymatrix@R12pt@C6pt{
&1\ar@{-}[ld]\ar@{-}[rd]&\\
2\ar@{-}[rd]&&3\ar@{-}[ld]\\
&5&\\
}}$$
so that there are short exact sequences
$$0\to M_a\to I_6\to M_b\to0$$
and $$0\to M_b\to I_5\to M_a\to0.$$

Splicing copies of these short exact sequences, we get exact sequences
$$\dots\to I_6\to I_5\to I_6\to I_5\to M_a\to 0$$
and
$$\dots\to I_5\to I_6\to I_5\to I_6\to M_b\to 0,$$
so that $M_a$ and $M_b$ are quasi-isomorphic to bounded above
complexes of injectives and are therefore in $\langle\Inj A\rangle$ by
Proposition~\ref{prop:loc}(h).

Now let
$$M_c=\vcenter{\xymatrix@R12pt@C6pt{
&2\ar@{-}[ld]\ar@{-}[rd]&\\
1\ar@{-}[rd]&&5\ar@{-}[ld]\\
&3&\\
}}$$
so that there is a short exact sequence
$$0\to M_c\to I_3\to M_a\to0.$$
Since the second and third terms are in $\langle\Inj A\rangle$, $M_c$
is also in $\langle\Inj A\rangle$ by Proposition~\ref{prop:loc}(a).

Let
$$M_d=\vcenter{\xymatrix@R12pt@C6pt{
6\ar@{-}[d]\\
4\\
}}$$
so that there is a short exact sequence
$$0\to M_d\to I_4\to M_c\to0,$$
proving that $M_d$ is in $\langle\Inj A\rangle$.

There are exact sequences
$$\dots\to M_d\to M_a\to M_d\to M_a\to S_4\to0$$
and
$$\dots\to M_a\to M_d\to M_a\to M_d\to S_6\to0,$$
so $S_4$ and $S_6$ are also in $\langle\Inj A\rangle$.

Let
$$M_e=\vcenter{\xymatrix@R12pt@C6pt{
6\ar@{-}[rd]&&3\ar@{-}[ld]\\
&1&\\
}}\quad\quad\mbox{and}\quad\quad
M_f=\vcenter{\xymatrix@R12pt@C6pt{
4\ar@{-}[rd]&&5\ar@{-}[ld]\\
&2&\\
}}$$

There are short exact sequences
$$0\to M_e\to I_1\to M_f\to0$$
and
$$0\to M_f\to I_2\to M_e\to0,$$
so as above we can deduce that $M_e$ and $M_f$ are in
$\langle\Inj A\rangle$, and since we already know that $S_4$ and $S_6$
are in $\langle\Inj A\rangle$ we can deduce that
$$M_g=\vcenter{\xymatrix@R12pt@C6pt{
3\ar@{-}[d]\\
1\\
}}\quad\quad\mbox{and}\quad\quad
M_h=\vcenter{\xymatrix@R12pt@C6pt{
5\ar@{-}[d]\\
2\\
}}$$
are in $\langle\Inj A\rangle$.

Let
$$M_i=\vcenter{\xymatrix@R12pt@C6pt{
2\ar@{-}[d]\\
5\ar@{-}[d]\\
6\\
}}\quad\quad\mbox{and}\quad\quad
M_j=\vcenter{\xymatrix@R12pt@C6pt{
1\ar@{-}[d]\\
3\ar@{-}[d]\\
4\\
}}$$
The short exact sequences
$$0\to M_i\to I_6\to M_j\to0$$
and
$$0\to M_j\to I_4\to M_i\to0$$
allow us to deduce that $M_i$ and $M_j$, and hence also
$$M_k=\vcenter{\xymatrix@R12pt@C6pt{
    2\ar@{-}[d]\\
    5\\
  }}\quad\quad\mbox{and}\quad\quad M_l=\vcenter{\xymatrix@R12pt@C6pt{
    1\ar@{-}[d]\\
    3\\
  }}$$ are in $\langle\Inj A\rangle$. Finally $S_1$ is in
$\langle\Inj A\rangle$ because of the exact sequence
$$\dots\to M_g\to M_l\to M_g\to M_l\to S_1\to 0,$$
and similarly the remaining simple modules $S_2$, $S_3$ and $S_5$ are
in $\langle\Inj A\rangle$.

\section{Some methods to show that injectives generate }
\label{sec:classes}

In this section we shall describe some methods to prove, for
particular algebras and certain classes of algebras, that injectives
generate. This includes classes that have long been known to satisfy
the finitistic dimension conjecture, including some classes where the
finitistic dimension conjecture is obvious, but the fact that
injectives generate is less so. There are other classes of algebras
which are known to satisfy the finitistic dimension conjecture (again,
sometimes for obvious reasons), for which I do not know whether
injectives generate.

As in Section~\ref{sec:example} the strategy that we shall generally
adopt is to show that all the simple modules are in the localizing
subcategory generated by injectives, by giving explicit constructions.

We shall frequently be considering syzygies, and more particularly
cosyzygies of modules, so we start with some notation. Throughout this
section, $A$ will always be a finite dimensional algebra over a field.

Let $M$ be an $A$-module. For $i\geq0$, the $i$th syzygy $\Omega^iM$ of
$M$ is the kernel of $d_{i-1}$ in a minimal projective resolution
$$\dots\stackrel{d_3}{\longrightarrow}P_2\stackrel{d_2}{\longrightarrow}P_1
\stackrel{d_1}{\longrightarrow}P_0\stackrel{d_0}{\longrightarrow}M
\stackrel{d_{-1}}{\longrightarrow}0$$
of $M$, and the $i$th cosyzygy $\Sigma^iM$ is the kernel of
$d^i$ in a minimal injective resolution
$$0\longrightarrow M\longrightarrow I^0\stackrel{d^0}{\longrightarrow} 
I^1\stackrel{d^1}{\longrightarrow} I^2\longrightarrow\dots$$ of $M$.

The importance of syzygies and cosyzygies to the finitistic dimension
conjecture has long been recognized. We shall borrow some terminology
and ideas from Goodearl and
Huisgen-Zimmermann~\cite{goodearl_huisgen}, which itself builds on
previous ideas of Jans~\cite{jans}, Colby and
Fuller~\cite{colby_fuller}, Igusa and Zacharia~\cite{igusa_zacharia},
Kirkman, Kuzmanovich and Small~\cite{kirkman_kuzmanovich_small},
Wilson~\cite{wilson} and others. Goodearl and Huisgen-Zimmermann work
primarily with syzygies, often for the opposite algebra, but for the
connection with the localizing subcategory generated by injectives it
is more convenient for us to use cosyzygies, so our definitions will
be roughly dual to theirs. When considering syzygies and cosyzygies of
finitely generated modules, which will usually be the case, they will
be precisely dual.

We shall use the standard notation that $\add(X)$, for a module $X$,
means the class of modules that are direct summands of finite direct
sums of copies of $M$, and more generally, if $\mathcal{X}$ is a
collection of modules, then $\add(\mathcal{X})$ is the class of
modules that are direct summands of finite direct sums of copies of
modules from $\mathcal{X}$.

\begin{defn}
  Let $M$ be a finitely generated $A$-module. We say that $M$ has {\bf
    finite cosyzygy type} if there is a finite set $\mathcal{X}$ of
  indecomposable $A$-modules such that, for every $t\geq0$, $\Sigma^tM$
  is in $\add(\mathcal{X})$.
\end{defn}

\begin{prop}\label{prop:cosyzygy-type}
  Suppose that a finitely generated module $M$ for a finite
  dimensional algebra $A$ has finite cosyzygy type. Then $M$ is in
  $\langle\Inj A\rangle$.
\end{prop}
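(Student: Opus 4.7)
My plan is to reduce the claim to showing that each indecomposable $X_i \in \mathcal{X}$ lies in $\langle \Inj A \rangle$, and then handle those $X_i$ by a graph-theoretic induction driven by the action of the cosyzygy functor on $\mathcal{X}$.

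After discarding from $\mathcal{X}$ any indecomposable that never appears as a summand of some $\Sigma^t M$, each remaining $X_i$ is a summand of some $\Sigma^s M$, so $\Sigma^t X_i$ is a summand of $\Sigma^{s+t} M \in \add(\mathcal{X})$ for every $t \geq 0$. Thus each $X_i$ has finite cosyzygy type with the same $\mathcal{X}$, and $\Sigma$ restricts to a functor $\add(\mathcal{X}) \to \add(\mathcal{X})$. Form a finite directed graph $G$ on vertex set $\mathcal{X}$, with an edge $X_j \to X_i$ whenever $X_i$ occurs as an indecomposable summand of $\Sigma X_j$, and argue by reverse induction on a topological ordering of the strongly connected components of $G$. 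If $X_i$ forms a singleton SCC with no self-loop, then every indecomposable summand of $\Sigma X_i$ lies in a strictly later SCC, hence in $\langle \Inj A \rangle$ by the inductive hypothesis, and applying Proposition~\ref{prop:loc}(a) to the short exact sequence $0 \to X_i \to I(X_i) \to \Sigma X_i \to 0$ places $X_i$ in $\langle \Inj A \rangle$.

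The main case is when $X_i$ lies on a cycle of $G$, i.e.\ $X_i$ itself is a summand of $\Sigma^t X_i$ for some $t \geq 1$. Set
$$Y := \bigoplus_{t \geq 0}\bigl(\Sigma^t X_i\bigr)^{(\mathbb{N})}\qquad\text{and}\qquad Y' := \bigoplus_{t \geq 1}\bigl(\Sigma^t X_i\bigr)^{(\mathbb{N})}.$$
Since $X_i$ is indecomposable the only summand of $\Sigma^0 X_i = X_i$ to check is $X_i$ itself, and the cyclic hypothesis guarantees $X_i$ is also a summand of some $\Sigma^t X_i$ with $t \geq 1$; hence the same set of indecomposables occurs in $Y$ and in $Y'$, each with multiplicity $\aleph_0$, and Krull--Schmidt--Azumaya gives an isomorphism $Y \cong Y'$ of modules. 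Summing the short exact sequences
$$0 \to \bigl(\Sigma^t X_i\bigr)^{(\mathbb{N})} \to \bigl(I^t\bigr)^{(\mathbb{N})} \to \bigl(\Sigma^{t+1} X_i\bigr)^{(\mathbb{N})} \to 0$$
over $t \geq 0$ produces a short exact sequence $0 \to Y \to J \to Y' \to 0$, where $J := \bigoplus_{t \geq 0}(I^t)^{(\mathbb{N})}$ is a direct sum of injectives; transporting along $Y' \cong Y$ converts it into
$$0 \to Y \xrightarrow{\iota} J \xrightarrow{\pi} Y \to 0.$$
The composite $\iota\pi : J \to J$ then satisfies $(\iota\pi)^2 = 0$ together with $\ker(\iota\pi) = \im(\iota\pi) = \iota(Y)$, so splicing copies of this sequence yields an exact sequence
$$\cdots \to J \to J \to J \xrightarrow{\pi} Y \to 0$$
that exhibits $Y$ as quasi-isomorphic to a bounded above complex of injectives. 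Proposition~\ref{prop:loc}(h) and~(c) place $Y$ in $\langle \Inj A \rangle$, and part~(e) then gives $X_i \in \langle \Inj A \rangle$ as a direct summand.

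Once every $X_i \in \mathcal{X}$ is known to lie in $\langle \Inj A \rangle$, the finite direct sum $M \in \add(\mathcal{X})$ does too. I expect the delicate step to be the identification $Y \cong Y'$: without the Eilenberg-swindle factor $(\mathbb{N})$ the multiplicities of those summands of $X_i$ that never reappear in later cosyzygies would break the isomorphism, and the cyclic/non-cyclic dichotomy in $G$ is precisely what isolates this obstruction so that it can be handled separately by the short-exact-sequence induction in the non-cyclic case.
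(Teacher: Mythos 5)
Your argument is correct and carries out the same underlying Eilenberg-swindle-and-splice device that the paper uses, but with a different bookkeeping scheme. The paper sets $\mathcal{X}_n$ to be the set of indecomposables appearing as summands of $\Sigma^t M$ for some $t\geq n$; since $\mathcal{X}_0$ is finite, the decreasing chain $\mathcal{X}_0\supseteq\mathcal{X}_1\supseteq\cdots$ stabilizes at some $n$, which guarantees that every indecomposable in $\mathcal{X}_n$ recurs in infinitely many cosyzygies. It then runs the swindle \emph{once}, on the single module $X:=\bigoplus_{m\geq n}\Sigma^m M$, obtaining $0\to X\to I\to X\to 0$, splicing, and extracting $\Sigma^nM$ as a direct summand; finally it recovers $M$ from the truncated injective resolution $I^0\to\cdots\to I^{n-1}\to\Sigma^n M$. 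You instead reduce to the individual indecomposables in $\mathcal{X}$, organize them by strongly connected components of the $\Sigma$-graph, and run the swindle separately on each cyclic vertex while handling acyclic vertices by an induction over the SCC ordering. The paper's stabilization step absorbs your acyclic (``transient'') summands automatically, which is why it needs no case split and is shorter; your version is more granular and makes explicit which indecomposables are genuinely recurrent under $\Sigma$, at the cost of a two-case induction and a per-cycle Krull--Schmidt--Azumaya verification. Both approaches are valid.
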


\begin{proof}
  For $n\geq0$ let $\mathcal{X}_n$ be the class of indecomposable
  $A$-modules that occur as a direct summand of $\Sigma^tM$ for some
  $t\geq n$. Since $M$ has finite cosyzygy type, $\mathcal{X}_0$
  contains only finitely many isomorphism classes. Also
$$\dots\subseteq\mathcal{X}_{n+1}\subseteq\mathcal{X}_n\subseteq\dots\subseteq\mathcal{X}_0,$$ 
so there is some $n$ for which $\mathcal{X}_m=\mathcal{X}_n$ for all
$m\geq n$.

Thus every module in $\add(\mathcal{X}_n)$ occurs as a direct summand
of $\Sigma^mM$ for infinitely many $m\geq n$, and so if $X$ is the
direct sum of countably many modules from each isomorphism class of
indecomposable modules in $\add(\mathcal{X}_n)$, then
$$\bigoplus_{m\geq n}\Sigma^mM\cong X\cong\bigoplus_{m>n}\Sigma^mM.$$

Taking the coproduct of the short exact sequences
$$0\to \Sigma^mM\to I^m\to\Sigma^{m+1}M\to0$$
for $m\geq n$ gives a short exact sequence
$$0\to X\to I\to X\to0,$$
where $I$ is injective.

Splicing together infinitely many copies of this short exact sequence
gives an exact sequence
$$\dots\to I\to I\to I\to X\to 0\to\dots.$$

Hence $X$ is quasi-isomorphic to a bounded above complex of
injectives, and so $X$ is in $\langle\Inj A\rangle$ by
Proposition~\ref{prop:loc}(h).

By Proposition~\ref{prop:loc}(e), $\Sigma^nM$ is in
$\langle\Inj A\rangle$, since it is a direct summand of $X$.

Since $M$ is quasi-isomorphic to the complex
$$\dots\to 0\to I^0\to I^1\to\dots\to I^{n-1}\to\Sigma^nM\to0\to\dots,$$
which is in $\langle\Inj A\rangle$ by Proposition~\ref{prop:loc}(g),
$M$ is in $\langle\Inj A\rangle$ by Proposition~\ref{prop:loc}(c).
\end{proof}

\begin{cor}\label{cor:simples-cosyzygy}
  If the simple $A$-modules have finite cosyzygy type, then injectives
  generate for $A$.
\end{cor}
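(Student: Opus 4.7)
The plan is to combine Proposition \ref{prop:cosyzygy-type} with Lemma \ref{lem:simples} directly; no further machinery should be required. Both of the needed ingredients are already in place, so the proof will be a short assembly step rather than a new construction.

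First I would fix a simple $A$-module $S$. Since $A$ is finite dimensional, $S$ is finitely generated, and by hypothesis $S$ has finite cosyzygy type. Proposition~\ref{prop:cosyzygy-type} then immediately gives $S\in\langle\Inj A\rangle$. Running this argument over the (finitely many, up to isomorphism) simple $A$-modules shows that every simple $A$-module lies in $\langle\Inj A\rangle$.

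Next I would invoke Lemma~\ref{lem:simples}: once all simple $A$-modules belong to $\langle\Inj A\rangle$, that lemma yields $\langle\Inj A\rangle=\mathcal{D}(A)$, i.e., injectives generate for $A$. Since both Proposition~\ref{prop:cosyzygy-type} and Lemma~\ref{lem:simples} are already established, there is no real obstacle to overcome here; the corollary is a formal consequence. If anything, the only thing worth emphasizing is that we are applying Proposition~\ref{prop:cosyzygy-type} to each simple separately (not to their direct sum, which need not have finite cosyzygy type on its own beyond the trivial bookkeeping), and that Lemma~\ref{lem:simples} then does the remaining work of propagating membership in $\langle\Inj A\rangle$ from simples to all modules, and finally to all of $\mathcal{D}(A)$ via Proposition~\ref{prop:projgen} style reductions already absorbed into that lemma.
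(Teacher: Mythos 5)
Your proof is correct and follows exactly the same route as the paper's: apply Proposition~\ref{prop:cosyzygy-type} to each simple module to put it in $\langle\Inj A\rangle$, then invoke Lemma~\ref{lem:simples} to conclude. Nothing more to add.
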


\begin{proof}
  By Proposition~\ref{prop:cosyzygy-type} every simple $A$-module is in
  $\langle\Inj A\rangle$, and so by Lemma~\ref{lem:simples},
  injectives generate for $A$.
\end{proof}

There are some classes of algebras for which this condition is
obviously satisfied.

\begin{cor}\label{cor:finrep}
  Injectives generate for any finite dimensional algebra $A$ with
  finite representation type.
\end{cor}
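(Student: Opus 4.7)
The plan is to deduce the corollary directly from Corollary~\ref{cor:simples-cosyzygy}, so I only need to verify that every simple $A$-module has finite cosyzygy type. This reduces to two easy observations: that finite representation type provides a single finite set $\mathcal{X}$ of indecomposable modules which controls \emph{all} finitely generated modules, and that cosyzygies of finitely generated modules remain finitely generated.

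First I would fix $\mathcal{X}$ to be a (finite) set of representatives for the isomorphism classes of indecomposable finitely generated $A$-modules; this is exactly what finite representation type provides. Next I would observe that for a finite dimensional algebra $A$ the indecomposable injectives are the finite dimensional modules $I_i=D(Ae_i)$, so the injective envelope of any finite dimensional module is again finite dimensional. Applied inductively to a minimal injective resolution of a simple module $S$, this shows that each term $I^t$ is finite dimensional and hence that each cosyzygy $\Sigma^tS$ is finite dimensional, in particular finitely generated.

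Then a Krull--Schmidt argument finishes things: since $\Sigma^tS$ is finitely generated, it decomposes as a finite direct sum of indecomposable finitely generated modules, and by the choice of $\mathcal{X}$ each such indecomposable summand lies in $\mathcal{X}$. Thus $\Sigma^tS\in\add(\mathcal{X})$ for every $t\geq 0$, so $S$ has finite cosyzygy type. By Corollary~\ref{cor:simples-cosyzygy} this gives that injectives generate for $A$.

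There is no real obstacle here; the only point worth stating carefully is that finiteness is preserved when passing to cosyzygies, which relies on the finite dimensionality of the indecomposable injectives $I_i$ and hence is specific to the finite dimensional algebra setting rather than to an arbitrary ring of finite representation type.
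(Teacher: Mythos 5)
Your proof is correct and follows exactly the route the paper intends: the paper's proof is the one-line remark that Corollary~\ref{cor:simples-cosyzygy} ``clearly applies,'' and your argument simply spells out why finite representation type gives the simple modules finite cosyzygy type. The details you fill in (cosyzygies stay finitely generated, Krull--Schmidt, the finite set $\mathcal{X}$ of indecomposables) are the correct justification for the paper's ``clearly.''
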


\begin{proof}
  Clearly Corollary~\ref{cor:simples-cosyzygy} applies.
\end{proof}

\begin{cor}\label{cor:rad2}
  Injectives generate for any finite dimensional algebra $A$ with
  $\rad^2(A)=0$.
\end{cor}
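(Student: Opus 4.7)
The plan is to apply Corollary~\ref{cor:simples-cosyzygy}, so it suffices to show that every simple $A$-module has finite cosyzygy type. I will prove the stronger statement that for any simple module $S$ and any $t\geq1$ the cosyzygy $\Sigma^t S$ is semisimple; since $A$ has only finitely many isomorphism classes of simple modules, say $S_1,\dots,S_n$, this immediately witnesses finite cosyzygy type with $\mathcal{X}=\{S_1,\dots,S_n\}$.

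The base case $t=1$ uses the hypothesis directly. For any $A$-module $M$ we have $\rad(M)\cdot\rad(A)=M\cdot\rad(A)^2=0$, so $\rad(M)$ is annihilated by $\rad(A)$; since $A$ is Artinian, annihilation by $\rad(A)$ is equivalent to lying in the socle, so $\rad(M)\subseteq\soc(M)$. Applied to the injective envelope $I_i$ of $S_i$ this gives $\rad(I_i)\subseteq\soc(I_i)=S_i$, whence $\Sigma S_i=I_i/S_i$ is a quotient of the semisimple module $I_i/\rad(I_i)$ and is therefore semisimple.

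For the inductive step I use the standard fact that the tail $0\to\Sigma M\to I^1\to I^2\to\cdots$ of a minimal injective resolution $0\to M\to I^0\to I^1\to\cdots$ is itself a minimal injective resolution of $\Sigma M$, so $\Sigma^{t+1}S_i\cong\Sigma(\Sigma^t S_i)$. If $N\cong\bigoplus_j S_j^{(J_j)}$ is semisimple, then its minimal injective resolution is the coproduct of the minimal injective resolutions of the $S_j$, giving $\Sigma N\cong\bigoplus_j(\Sigma S_j)^{(J_j)}$, which is semisimple by the base case. Induction on $t$ completes the argument and Corollary~\ref{cor:simples-cosyzygy} yields the conclusion. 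The only place where the hypothesis $\rad^2(A)=0$ is invoked is the base case, and I do not anticipate any real obstacle there.
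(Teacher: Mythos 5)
Your proof is correct and follows the same route as the paper, which simply asserts that $\Sigma^i S$ is semisimple for every $i$ and then invokes Corollary~\ref{cor:simples-cosyzygy}; you merely supply the routine verification (via $\rad(I)\subseteq\soc(I)$ when $\rad^2(A)=0$, plus the additivity of minimal injective resolutions) that the paper leaves to the reader.
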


\begin{proof}
  If $S$ is a simple $A$-module, then $\Sigma^iS$ is semisimple for
  every $i$, and so Corollary~\ref{cor:simples-cosyzygy} applies.
\end{proof}

Recall that a monomial algebra is the path algebra of a quiver modulo
an admissible ideal generated by paths.

\begin{cor}\label{cor:monomial}
  Injectives generate for a monomial algebra $A$.
\end{cor}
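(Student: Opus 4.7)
The plan is to apply Corollary~\ref{cor:simples-cosyzygy}: it suffices to show that every simple $A$-module has finite cosyzygy type. The strategy I would take is to translate cosyzygies over $A$ into syzygies over $A^{op}$ via the $k$-duality, and then use the combinatorial structure of a monomial algebra to bound the isomorphism classes that occur.

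First, I would exploit the exact $k$-duality $D\colon\mod A\to\mod A^{op}$, which exchanges injective and projective indecomposables. Applied to a minimal injective resolution $0\to M\to I^0\to I^1\to\cdots$ of a finitely generated right $A$-module $M$, this duality produces a minimal projective resolution of the right $A^{op}$-module $DM$, so that $\Sigma^tM\cong D(\Omega^t DM)$ for all $t\geq 0$. Hence $M$ has finite cosyzygy type over $A$ if and only if $DM$ has finite syzygy type over $A^{op}$. Since the opposite of a monomial algebra is again monomial (reverse the arrows of the quiver; the relations are still paths), it is enough to prove the dual statement: over any monomial algebra, every simple module has finite syzygy type.

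For this, write a monomial algebra as $B=kQ/I$ with $I$ generated by paths. For each vertex $j$ the radical of the projective $e_jB$ decomposes as $\bigoplus_{\alpha\colon s(\alpha)=j}\alpha B$, a direct sum over the arrows out of $j$. Each summand is a cyclic ``path module'' isomorphic to $e_{t(\alpha)}B/K_\alpha$, where $K_\alpha$ is generated by those paths $\beta$ starting at $t(\alpha)$ with $\alpha\beta\in I$. Because $K_\alpha$ is itself generated by paths, the same decomposition applies to $\alpha B$: its first syzygy is a direct sum of path modules $\beta B$. An easy induction then shows that every $\Omega^nS_j$ is a direct sum of path modules $\gamma B$ indexed by paths $\gamma$ in $Q$. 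Since $B$ is finite-dimensional, there are only finitely many nonzero paths modulo the relations, and hence only finitely many path modules up to isomorphism; this gives finite syzygy type.

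The main technical point, and probably the only real work, is the inductive decomposition of syzygies of simples into path modules over a monomial algebra. This relies entirely on the defining feature of monomial algebras: both the arrows out of each vertex and the relations are paths, so the radical of each $e_jB$ is a direct sum of cyclic path modules, and this property is inherited by every syzygy. Once that combinatorial lemma is in hand, the duality reduces the claim to Corollary~\ref{cor:simples-cosyzygy}.
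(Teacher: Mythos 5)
Your proposal is correct and follows essentially the same route as the paper: pass to the opposite algebra (still monomial), observe that cosyzygies over $A$ are $k$-duals of syzygies over $A^{op}$, invoke syzygy-finiteness of simples over a monomial algebra, and conclude via Corollary~\ref{cor:simples-cosyzygy}. The only difference is that the paper cites Igusa--Zacharia and Huisgen-Zimmermann for the syzygy-finiteness of simples, whereas you sketch the standard ``path module'' decomposition proof of that fact; your sketch is accurate.
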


\begin{proof}
  The opposite algebra of a monomial algebra is also a monomial
  algebra, and the simple modules for a monomial algebra are syzygy
  finite~\cite{igusa_zacharia,huisgen_domino}. So
  Corollary~\ref{cor:simples-cosyzygy} applies.
\end{proof}

This implies the fact, which was first proved by Green, Kirkman and
Kuzmanovich~\cite{green_kirkman_kuzmanovich}, that every monomial
algebra satisfies the finitistic dimension conjecture.

Here is another definition that is simply the dual of a definition of
Goodearl and Huisgen-Zimmermann~\cite{goodearl_huisgen}.

\begin{defn}
  Let $M$ be a finitely generated $A$-module. If there is some
  $n\geq0$ such that every non-injective indecomposable direct summand
  of $\Sigma^nM$ occurs as a direct summand of $\Sigma^tM$ for
  infinitely many $t$, then the {\bf cosyzygy repetition index} of $M$
  is the least such $n$. If there is no such $n$ then the cosyzygy
  repetition index of $M$ is $\infty$.
\end{defn}

Clearly a module with finite cosyzygy type also has finite cosyzygy
repetition index, since we could take the $n$ in the definition of
cosyzygy repetition index to be the $n$ that occurs in the proof
of Proposition~\ref{prop:cosyzygy-type}.

\begin{prop}\label{prop:repetition}
  If a finitely generated module $M$ for a finite dimensional algebra
  $A$ has finite cosyzygy repetition index, then $M$ is in
  $\langle\Inj A\rangle$.
\end{prop}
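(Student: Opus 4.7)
The plan is to adapt the proof of Proposition~\ref{prop:cosyzygy-type}. There, finite cosyzygy type let us pigeonhole on finitely many isomorphism classes to arrange an isomorphism $\bigoplus_{m \geq n} \Sigma^m M \cong \bigoplus_{m > n} \Sigma^m M$. Here I only have control of the \emph{non-injective} indecomposable summands of $\Sigma^n M$, so I will peel off the injective part of $\Sigma^n M$ separately and invoke the Krull--Schmidt--Azumaya theorem to handle the rest.

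Let $n$ be the cosyzygy repetition index of $M$, and decompose $\Sigma^n M = J \oplus N$, where $J$ is the maximal injective direct summand (so $N$ has no injective indecomposable summands). Write $X' = \bigoplus_{m > n} \Sigma^m M$. I first claim $N \oplus X' \cong X'$. Indeed, every indecomposable summand of $N$ is a non-injective indecomposable summand of $\Sigma^n M$, and so by hypothesis occurs as a summand of $\Sigma^t M$ for infinitely many $t > n$, hence with countably infinite multiplicity in $X'$. Adding the finitely many copies coming from $N$ does not change this cardinality. Since every indecomposable involved is a finitely generated module over a finite dimensional algebra, and therefore has local endomorphism ring, the Krull--Schmidt--Azumaya theorem gives the desired isomorphism.

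Next, take the coproduct of the short exact sequences $0 \to \Sigma^m M \to I^m \to \Sigma^{m+1} M \to 0$ for $m \geq n$ to obtain
$$0 \to J \oplus N \oplus X' \to \bigoplus_{m \geq n} I^m \to X' \to 0.$$
Since $J$ is injective, its inclusion into $\bigoplus_{m \geq n} I^m$ splits; extending this splitting back along the inclusion $J \hookrightarrow J \oplus N \oplus X'$ shows that the whole sequence decomposes as the direct sum of $0 \to J \to J \to 0 \to 0$ and a short exact sequence
$$0 \to N \oplus X' \to I' \to X' \to 0$$
with $I'$ injective. Using $N \oplus X' \cong X'$, this becomes $0 \to X' \to I' \to X' \to 0$, and splicing infinitely many copies yields an acyclic complex $\cdots \to I' \to I' \to I' \to X' \to 0$. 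Hence $X'$ is quasi-isomorphic to a bounded above complex of injectives and lies in $\langle \Inj A \rangle$ by parts (h) and (c) of Proposition~\ref{prop:loc}.

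Finally, $N$ is a direct summand of $N \oplus X' \cong X'$, so $N \in \langle \Inj A \rangle$ by Proposition~\ref{prop:loc}(e); adding back the injective module $J$ gives $\Sigma^n M \in \langle \Inj A \rangle$; and $M$ is quasi-isomorphic to the bounded complex $0 \to I^0 \to \cdots \to I^{n-1} \to \Sigma^n M \to 0$, whose terms all lie in $\langle \Inj A \rangle$, so $M \in \langle \Inj A \rangle$ by parts (f) and (c) of Proposition~\ref{prop:loc}. The main obstacle is the first step: rigorously justifying the infinite-coproduct isomorphism $N \oplus X' \cong X'$ via Krull--Schmidt--Azumaya is the whole point of restricting the hypothesis in the definition of cosyzygy repetition index to \emph{non-injective} summands, and it is what allows the rest of the proof to mirror the finite cosyzygy type case.
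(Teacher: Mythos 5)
Your proof is correct, and it follows the paper's strategy of adapting Proposition~\ref{prop:cosyzygy-type}; the differences are in how the boundary term is handled, and are worth noting. The paper simply takes the coproduct of the short exact sequences for $m$ \emph{strictly} greater than $n$, producing $0\to X\to I\to X\to 0$ with $X=\bigoplus_{m>n}\Sigma^m M$; but making this precise requires observing that every indecomposable summand of $\Sigma^{n+1}M$ recurs infinitely often (which holds because $\Sigma^{n+1}M=\Sigma^1 N$ with $N$ the non-injective part of $\Sigma^n M$, and whenever $N_i\mid\Sigma^t M$ then $\Sigma^1 N_i\mid\Sigma^{t+1}M$), a step the paper leaves tacit. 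You instead keep the coproduct over $m\geq n$, explicitly split off the injective summand $J$ of $\Sigma^n M$ from the resulting short exact sequence, and invoke the definition directly through the isomorphism $N\oplus X'\cong X'$. This is a slightly longer route but hews more closely to the hypothesis, which speaks only of the non-injective summands of $\Sigma^n M$. The appeal to Krull--Schmidt--Azumaya, which you spell out, is also only implicit in the paper's version (in both this proof and that of Proposition~\ref{prop:cosyzygy-type}). One small caution in the splitting-off step: to conclude that the short exact sequence decomposes as $(0\to J\to J\to 0)\oplus(0\to N\oplus X'\to I'\to X'\to 0)$, you should choose the retraction $I\to J$ to extend the projection $\iota(J)\oplus\iota(N\oplus X')\to\iota(J)$, so that its kernel $I'$ actually contains $\iota(N\oplus X')$; an arbitrary retraction of $J\hookrightarrow I$ need not do this.
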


\begin{proof}
  We shall suppose the cosyzygy repetition index of $M$ is $n$.

  The proof is almost identical to that of
  Proposition~\ref{prop:cosyzygy-type}, except that we'll take the
  coproduct of the short exact sequences
$$0\to\Sigma^mM\to I^m\to\Sigma^{m+1}M\to0$$
for $m$ {\em strictly} greater than $n$ in order to get a short exact sequence
$$0\to X\to I\to X\to 0.$$
\end{proof}

One useful fact that Goodearl and
Huisgen-Zimmermann~\cite{goodearl_huisgen} prove is that the big
finitistic dimension conjecture is true for $A$ if every simple module
for the opposite algebra is a submodule of a module with finite syzygy
type (or more generally with finite syzygy repetition index). There
seems to be no obvious reason that a similar condition should imply
that injectives generate for $A$ since, for example, this condition is
obviously satisfied for local algebras (for which the finitistic
dimension is also obvious since there are no non-projective modules
with finite projective dimension), but proving that injectives
generate for local algebras seems no easier than for general finite
dimensional algebras.

However, there is a way of extracting information from the localizing
subcategory generated by injectives, short of proving that it is the
whole derived category, in order to prove that an algebra satisfies
the big finitistic dimension conjecture, and this does generalize
results of~\cite{goodearl_huisgen}.  

\begin{thm}
  Let $A$ be a finite dimensional algebra over a field. Suppose that,
  for every simple $A$-module $S$, there is an object $J_S$ of
  $\langle\Inj A\rangle$ with a map $\alpha_S: J_S\to S$ in
  $\mathcal{D}(A)$ that is non-zero on cohomology (for example, this
  would be the case if there were a module $J_S$ in
  $\langle\Inj A\rangle$ with a copy of $S$ in its head). Then $A$
  satisfies the big finitistic dimension conjecture.
\end{thm}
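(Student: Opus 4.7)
The plan is to use Theorem~\ref{thm:perp}(b), which identifies the big finitistic dimension conjecture for $A$ with the vanishing $DA^\perp\cap\mathcal{D}^+(A)=\{0\}$. I would proceed by contradiction: suppose $X$ is a nonzero object of $DA^\perp\cap\mathcal{D}^+(A)$, and use the hypothesized maps $\alpha_S$ to manufacture a nonzero morphism from some $J_S\in\langle\Inj A\rangle$ into a shift of $X$, contradicting the fact that ${}^\perp X$ is a localizing subcategory of $\mathcal{D}(A)$ containing $DA$ and hence all of $\langle\Inj A\rangle$.

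First I would let $n$ be the smallest integer with $H^n(X)\neq 0$. Since $A$ is finite dimensional, any nonzero $A$-module has nonzero socle (any nonzero cyclic submodule is finite dimensional and hence has a simple submodule), so there is a simple submodule $\iota\colon S\hookrightarrow H^n(X)$. The next step, which is the main technical obstacle, is to promote $\iota$ to a morphism $\beta\colon S\to X[n]$ in $\mathcal{D}(A)$ inducing $\iota$ on $H^0$. To achieve this I would replace $X$ by its minimal injective resolution, a bounded below complex of injectives starting in degree $n$ with $\ker(d^n)=H^n(X)$. Then $S\hookrightarrow H^n(X)\hookrightarrow X^n$ is killed by $d^n$, so it defines a cochain map $S\to X[n]$; since $X[n]$ is a bounded below complex of injectives and hence K-injective, this cochain map represents a morphism $\beta\in\Hom_{\mathcal{D}(A)}(S,X[n])$ with $H^0(\beta)=\iota$.

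Finally I would form the composite $\beta\circ\alpha_S\colon J_S\to X[n]$. Its degree-zero cohomology factors as $H^0(J_S)\to S\to H^n(X)$, where the first factor is $H^0(\alpha_S)$, which is surjective (it is nonzero by hypothesis and $S$ is simple), and the second is the injection $\iota$, so the composite is nonzero on $H^0$ and hence nonzero in $\mathcal{D}(A)$. On the other hand, ${}^\perp X$ is a localizing subcategory of $\mathcal{D}(A)$ containing $DA$, hence containing all of $\langle\Inj A\rangle$ and in particular $J_S$, so $\Hom_{\mathcal{D}(A)}(J_S,X[n])=0$. This contradiction completes the argument. The delicate step is the construction of $\beta$: one must pick a resolution of $X$ in which $H^n(X)$ appears as $\ker(d^n)$ and then invoke K-injectivity of bounded below complexes of injectives to pass from cochain maps to morphisms in $\mathcal{D}(A)$; once this is in hand, the rest of the proof is a formal composition-and-orthogonality argument that parallels the end of the proof of Theorem~\ref{thm:findim}.
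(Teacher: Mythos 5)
Your proposal is correct and follows essentially the same route as the paper's proof: reduce to Theorem~\ref{thm:perp}(b), take a nonzero $X\in DA^\perp\cap\mathcal{D}^+(A)$, pick a simple submodule $S$ of the lowest nonvanishing cohomology, lift it to a map $S\to X[n]$, precompose with $\alpha_S$, and derive a contradiction from the orthogonality $\langle\Inj A\rangle\subseteq{}^\perp X$. The only difference is cosmetic (the paper shifts $X$ so the lowest cohomology sits in degree zero, while you keep it in degree $n$), and you spell out the step the paper leaves implicit, namely that after replacing $X$ by a bounded below injective resolution starting in degree $n$ the inclusion $S\hookrightarrow H^n(X)=\ker(d^n)\subseteq X^n$ is a genuine cochain map and hence gives the required morphism in $\mathcal{D}(A)$; the appeal to K-injectivity there is harmless but not actually needed, since any cochain map already yields a morphism in $\mathcal{D}(A)$ whose effect on cohomology is the induced one.
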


\begin{proof}
  Suppose that $A$ does not satisfy the big finitistic dimension
  conjecture. Then by Theorem~\ref{thm:perp} there is a nonzero object
  $X$ of $\mathcal{D}^+(A)$ in $DA^\perp$. Replacing $X$ by an
  injective resolution and shifting in degree, we can assume that $X$
  is a complex of injective modules concentrated in positive degrees,
  with nonzero cohomology in degree zero.

  Let $S$ be a simple submodule of $H^0(X)$, so there is a map
  $\beta:S\to X$ that is nonzero on cohomology, and let
  $\alpha_S:J_S\to S$ be the map guaranteed by the hypotheses of the
  theorem. Then the composition $\beta\alpha_S: J_S\to X$ is nonzero
  on cohomology, and therefore is a nonzero map in $\mathcal{D}(A)$
  from an object of $\langle\Inj A\rangle$ to an object of $DA^\perp$,
  giving a contradiction.
\end{proof}

\section{Summary and open questions}
\label{sec:summary}

I know of no examples of finite dimensional algebras for which it is
known that injectives do not generate.

If it were proven that injectives generate for all finite dimensional
algebras, then in light of Theorem~\ref{thm:findim} some longstanding
open problems would be settled. And perhaps an example where injectives
do not generate might give a clue that would help find a
counterexample to some other conjectures.

For convenience, the following theorem summarizes various classes of
finite dimensional algebras for which the results of this paper prove
that injectives do generate. The list is not complete (we omit some of
the more technical conditions that follow from
Section~\ref{sec:classes}) or efficient (some of the classes properly
contain others).

\begin{thm}\label{thm:summary}
  Let $A$ be a finite dimensional algebra over a field. If $A$ is in
  any of the following classes of algebras, then injectives generate
  for $A$.
  \begin{enumerate}
  \item[(a)] Commutative.
  \item[(b)] Gorenstein.
  \item[(c)] Finite global dimension.
  \item[(d)] Self-injective.
  \item[(e)] Finite representation type.
  \item[(f)] Radical square zero.
  \item[(g)] Monomial.
  \item[(h)] Algebras derived equivalent to any of the above.
  \end{enumerate}
\end{thm}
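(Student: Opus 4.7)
The plan is to observe that each class listed has already been handled, either directly or after a small reformulation, by a result appearing earlier in the paper, so the proof will be essentially a dispatch table with short justifications.

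First, for (c) the hypothesis gives directly that $A_A$ has finite injective dimension (in fact finite projective dimension, and then finite injective dimension is forced by finite global dimension together with finite dimensionality over $k$), so Theorem~\ref{thm:fingd} applies. For (a), a finite dimensional commutative $k$-algebra is automatically (left and right) Noetherian, so Theorem~\ref{thm:noeth} applies. For (b), a (finite dimensional) Gorenstein algebra is by definition one for which $A_A$ has finite injective dimension, so again Theorem~\ref{thm:fingd} applies; class (d) is then subsumed, since a self-injective algebra has $\id(A_A)=0$ (and in any case is Gorenstein). For (e), (f) and (g) the statement is exactly the content of Corollaries~\ref{cor:finrep}, \ref{cor:rad2} and \ref{cor:monomial} respectively. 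Finally, (h) is immediate from Theorem~\ref{thm:dereq}, since that theorem asserts precisely that ``injectives generate'' is invariant under derived equivalence.

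The main potential obstacle is only one of bookkeeping: making sure that the characterization of each class matches the hypothesis actually used in the cited result. The only non-trivial point is the reduction of (b) and (d) to Theorem~\ref{thm:fingd}, where one has to recall the equivalence, for a finite dimensional $k$-algebra, between $\id(A_A)<\infty$ and $\pd({}_AA^\ast)<\infty$ (so that the usual definition of Gorenstein in terms of both one-sided injective dimensions being finite is equivalent to what Theorem~\ref{thm:fingd} asks); and the reduction of (g) to Corollary~\ref{cor:monomial} requires noting that the opposite of a monomial algebra is monomial, which is what allows the cited syzygy-finiteness result to be dualized into the cosyzygy-finiteness needed by Corollary~\ref{cor:simples-cosyzygy}. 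All other inclusions are immediate.
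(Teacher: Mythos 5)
Your proposal takes essentially the same dispatch approach as the paper: (a) via Theorem~\ref{thm:noeth}, (b)--(d) via Theorem~\ref{thm:fingd}, (e)--(g) via Corollaries~\ref{cor:finrep}, \ref{cor:rad2}, \ref{cor:monomial}, and (h) via Theorem~\ref{thm:dereq}. The only cosmetic difference is that you treat (c) by a direct appeal to Theorem~\ref{thm:fingd}, whereas the paper folds (c) and (d) into (b); and your parenthetical about the ``equivalence'' between $\id(A_A)<\infty$ and the definition of Gorenstein is slightly overstated --- one only needs that the Gorenstein hypothesis \emph{implies} $\id(A_A)<\infty$, which is one half of the standard definition (the two one-sided conditions are not known to be equivalent, as the paper notes) --- but the conclusion drawn is still correct.
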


\begin{proof}
  Finite dimensional algebras are Noetherian, so (a) follows from
  Theorem~\ref{thm:noeth}. An algebra is Gorenstein if projective
  modules have finite injective dimension and injective modules have
  finite projective dimension (whether one of the conditions implies
  the other is an open question), so (b) follows from
  Theorem~\ref{thm:fingd}. Both (c) and (d) are special cases of
  (b). Corollaries~\ref{cor:finrep},~\ref{cor:rad2}
  and~\ref{cor:monomial} give (e), (f) and
  (g). Theorem~\ref{thm:dereq} implies (h).
\end{proof}

There are other classes of finite dimensional algebras for which the
finitistic dimension conjecture is known to hold, but for which I
don't know whether injectives generate. For example, local algebras
satisfy the finitistic dimension conjecture for obvious reasons (there
are no non-projective modules with finite projective dimension), but
it doesn't seem any easier to prove that injectives generate for local
algebras than it is for general algebras.

The little finitistic dimension conjecture has been proved for
algebras with radical cube zero by Green and
Huisgen-Zimmermann~\cite{green_huisgen} and for algebras with
representation dimension three by Igusa and
Todorov~\cite{igusa_todorov}. However, the big finitistic dimension
conjecture does not seem to be known for these algebras, so maybe we
should expect that proving that injectives generate should be harder
than for other classes of algebras where the big finitistic dimension
conjecture is known to hold.

Even if it turns out to be hard to find an example of a finite
dimensional algebra for which injectives don't generate, it might be
easier to find a right Noetherian ring. By Theorem~\ref{thm:noeth}
such an example would need to be noncommutative.

It would also be interesting to clarify the connection, briefly
touched on in Section~\ref{sec:colocalizing}, between injectives
generating the derived category as a localizing subcategory and
projectives generating as a colocalizing subcategory, both for finite
dimensional algebras and for general rings.

\bibliography{mybib}{} 
\bibliographystyle{amsalpha}


\end{document}